\numberwithin{equation}{section}
\begin{document}
\title[ Strong Convergence Theorems for Equilibrium problems...] { Strong convergence theorems  by a new hybrid method for equilibrium problems and relatively nonexpansive mappings in Banach spaces}
\author[S. Alizadeh and F. Moradlou]{Sattar Alizadeh$^1$ and Fridoun
Moradlou$^2$}
\address{\indent $^{1,2}$Department of Mathematics
\newline \indent Sahand University of Technology
\newline \indent Tabriz, Iran}
\email{\rm $^1$ sa\_alizadeh@sut.ac.ir} \email{\rm $^2$ moradlou@sut.ac.ir \& fridoun.moradlou@gmail.com}
\thanks{}
\begin{abstract}
In this paper, we introduce a new modified Ishikawa iteration for
finding a common element of the set of solutions of an equilibrium
problem and the set of fixed points of  relatively nonexpansive mappings
in a Banach space. Our results generalize, extend and enrich some
existing results in the literature.
\end{abstract}
\subjclass[2010]{Primary 47H10, 47H09, 47J25, 47J05}

\keywords{Equilibrium problems, Fixed point, Hybrid method, Strong convergence, Weak convergence}

\maketitle

\baselineskip=15.8pt \theoremstyle{definition}
  \newtheorem{df}{Definition}[section]
    \newtheorem{rk}[df]{Remark}
\theoremstyle{plain}
 \newtheorem{lm}[df]{Lemma}
  \newtheorem{thm}[df]{Theorem}
  \newtheorem{exa}[df]{example}
  \newtheorem{cor}[df]{Corollary}
\setcounter{section}{0}
%%%%%%%%%%%%%%%%%%%%%%%%%%%%%%%%%%%%%%%%%%%%%%%%%%%%%%
\section{Introduction}
Throughout this paper,
%we denote by $\mathbb{N}$ and $\mathbb{R}$
%the set of positive integers and real numbers, respectively.
%Also,
let $E$ be a real Banach space with the dual space $E^{*}$ and let
$C$ be a nonempty closed  convex subset of $E$. Let $f$ be a
bifunction from $C\times C$ to $\mathbb{R}$. The equilibrium problem
for $f:C\times C\rightarrow\mathbb{R}$ is to find $x\in E$ such that
\begin{equation}\label{ep.1}
f(x,y)\geq0,\quad(y\in C).
\end{equation}
The set of solutions of (\ref{ep.1}) is denoted by $EP(f)$, i.e.,
$$EP(f)=\{x\in C:\;f(x,y)\geq0,\;\;\forall y\in C\}.$$
\par
 A self-mapping  $S$ of $C$ is called nonexpansive if
$$\|Sx-Sy\|\leq\|x-y\|,\quad( x,y\in C).$$
We denote by $F(S)$ the set of fixed points of $S$.
\par
Let $S:C\rightarrow E^{*}$ be a mapping and let $f(x,y)=\langle
Sx,y-x\rangle$ for all $x,y\in C$. Then $z\in EP(f)$ if only if
$\langle Sz,y-z\rangle\geq0$ for all $y\in C$, i.e., $z$ is a solution
of the variational inequality $\langle Sx,y-x\rangle\geq0$. So, the
formulation (\ref{ep.1}) includes variational inequalities as
special cases. Also, numerous problems in physics, optimization and
economics reduce to find a solution of (\ref{ep.1}). Some methods
have been proposed to solve the equilibrium problem; see for
instance \cite{b,a4,e,a16,Kinder}.
\par
In the recent years, many authors studied the problem of finding a
common element of the set of fixed points of a nonexpansive mapping
and the set of solutions of an equilibrium problem in the framework
of Hilbert spaces and Banach spaces, respectively; see for instance,
\cite{a2,a3,a5,a9,a10,a18,a20,a23,a24} and the references therein.
\par
\par
Let $C$ be a nonempty closed convex subset of a Banach space. In
1953, for a self-mapping $S$ of $C$, Mann \cite{mann} defined the
following iteration procedure:
\begin{equation}\label{mann}
\begin{cases}
 & x_{0} \in E \,\,\,\hbox{chosen arbitrarily},\\
 & x_{n+1}  = \alpha_{n}x_{n}+(1-\alpha_{n})Sx_{n},
\end{cases}
\end{equation}
where $0\leq \alpha_{n}\leq 1$ for all $n \in \mathbb{N}\cup \{0\}$,
\par
Let $K$ be a compact convex subset of a Hilbert space $H$.
 In 1974, for a Lipschitzian pseudocontractive self-mapping $S$ of $K$, Ishikawa \cite{ishikawa}
defined the following iteration procedure:
\begin{equation}\label{ishikawa}
\begin{cases}
  & x_{0} \in K \,\,\,\hbox{chosen arbitrarily},\\
  & y_{n}= \beta_{n}x_{n}+ (1-\beta_{n})Sx_{n}, \\
  & x_{n+1}  = \alpha_{n}x_{n}+(1-\alpha_{n})Sy_{n},
\end{cases}
\end{equation}
where $0\leq \beta_{n} \leq \alpha_{n}\leq 1$ for all $n \in
\mathbb{N}\cup \{0\}$ and he proved strong convergence of the
sequence $\{x_{n}\}$ generated by the above iterative scheme if
$\lim_{n \to \infty} \beta_{n} =1$ and $\sum_{n=1}^{\infty}
(1-\alpha_{n})(1-\beta_{n}) = \infty$. By taking $\beta_{n} =1$ for
all $n \geq 0$ in $(\ref{ishikawa})$, Ishikawa iteration process
reduces to Mann iteration process.
\par
In general, to gain the convergence in Mann and Ishikawa iteration
processes, we must assume that underlying space $E$ has elegant
properties. For example, Reich \cite{Reich} proved that if $E$ is a
uniformly convex Banach space with a Fr\'{e}chet differentiable norm
and if $\{\alpha_{n}\}$ is such that $\sum_{n=1}^{\infty} \alpha_{n}
(1- \alpha_{n}) = \infty$, then the Mann iteration scheme converges
weakly to a fixed point of $T$. However, we know that the Mann
iteration process is weakly convergence even in a Hilbert space
\cite{Genel}. Also, Tan and Xu \cite{Tan} proved that if $E$ is a  a
uniformly convex Banach space which satisfies Opial's condition or
whose norm is Fr\'{e}chet differentiable and if $\{\alpha_{n}\}$ and
$\{\beta_{n}\}$ are such that $\sum_{n=1}^{\infty} \alpha_{n} (1-
\alpha_{n})$ diverges, $\sum_{n=1}^{\infty} \alpha_{n} (1-
\beta_{n}) $ converges and $\limsup \beta_{n} < 1$, then Ishikawa
iteration process converges weakly to a fixed point of $T$.
\par
It easy to see that process (\ref{ishikawa}) is more general than
the process (\ref{mann}). Also, for a Lipschitz pseudocontractive
mapping in a Hilbert space, process (\ref{mann}) is not known to
converge to a fixed point while the process (\ref{ishikawa}) is
convergence. In spite of these facts, researchers are interested to
study the convergence theorems by process (\ref{mann}), because of
the formulation of process (\ref{mann}) is simpler than that of
(\ref{ishikawa}) and if $\{\beta_{n}\}$ satisfies suitable
conditions, we can gain a convergence theorem for process
(\ref{ishikawa}) on a convergence theorem for process (\ref{mann}).
\par
In recent years, many authors have proved weak or strong convergence
theorems for some nonlinear mappings by using various iteration
processes in the framework of Hilbert spaces and Banach spaces, see,
\cite{am1, am2, Nakajo, taka2003, T5}.
\par
Recently, to obtain strong convergence, many mathematicians have
been extensively considered modified processes. Nakajo and Takahashi
\cite{Nakajo} proposed the following modification  of the Mann's
iteration for a nonexpansive self-mapping $S$ of a nonempty, closed convex subset  $C$ in a Hilbert
space $H$:
\begin{equation*}\label{mod. mann}
\begin{cases}
  & x_{0} \in C \,\,\,\hbox{chosen arbitrarily},\\
  &y_{n}  = \alpha_{n}x_{n}+ (1-\alpha_{n})Sx_{n}, \\
  &C_{n}  = \{u \in C\,\,:\|y_{n}-u\|\leq \|x_{n}-u\|\},\\
  &Q_{n} = \{u \in C\,\,:\langle x_{n}-u, x_{0}-x_{n}\rangle \geq
  0\},\\
  &x_{n+1} = P_{C_{n}\cap Q_{n}}x_{0},
\end{cases}
\end{equation*}
where $P_{K}$ denotes the metric projection from $H$ onto a closed
convex subset $K$ of $H$. They proved strong convergence of the
sequence $\{x_{n}\}$, if the sequence $\{\alpha_{n}\}$ is bounded
above from one.
\par
In 2005, Matsushita and Takahashi \cite{Mat} introduced a new hybrid algorithim for a relatively self-mapping $S$ of  $C$ in a Banach space $E$ as follows:
\begin{equation*}
\begin{cases}
 & x_{0}=x \in C\,\,\,\hbox{chosen arbitrarily},\\
 & y_n=J^{-1}(\alpha_nJx_n+(1-\alpha_n)JSx_n),\\
 &H_n=\lbrace u\in C:\phi(u,y_n)\leq\phi(u,x_n)\rbrace,\\
 &W_n=\lbrace z\in C:\langle x_n-z,Jx_n-Jx\rangle\leq0\rbrace,\\
 &x_{n+1}=\Pi_{C_n\cap Q_n}x, n=0,1,2,...,
  \end{cases}
\end{equation*}
where $J$ is the duality mapping on $E$ and $\Pi_{F(S)}$ is the generalized projection from $C$ onto $F(S)$. They proved that $\{x_n\}$ converges strongly to $\Pi_{F(S)}x$.
\par
In 2006, Martinez-Yanes and Xu \cite{martinez} introduced the
following modified Ishikawa iteration process for a nonexpansive
self-mapping $S$ of a nonempty, closed convex subset $C$ with $F(S) \neq\emptyset$ in a Hilbert space
$H$:
\begin{equation*}\label{mod. ishikawa}
\begin{cases}
  & x_{0} \in E\,\,\,\hbox{chosen arbitrarily},\\
  & z_{n} = \beta_{n}x_{n}+(1-\beta_{n})Sx_{n}\\
  &y_{n}  = \alpha_{n}x_{n}+ (1-\alpha_{n})Sz_{n}, \\
  &C_{n}  = \{u \in C\,\,:\|y_{n}-u\|^2\leq \|x_{n}-u\|^2 \\
  &\hspace{3cm} + (1-\alpha_{n})(\|z_{n}\|^{2} - \|x_{n}\|^{2} +2 \langle x_{n}-z_{n},u\rangle \geq 0 )\},\\
  &Q_{n} = \{u \in C\,\,:\langle x_{n}-u, x_{0}-x_{n}\rangle \geq
  0\},\\
  &x_{n+1} = P_{C_{n}\cap Q_{n}}x_0,
\end{cases}
\end{equation*}
where $\{\alpha_{n}\}$ and $\{\beta_{n}\}$ are sequences in $[0,1]$.  They proved that if $\{\alpha_{n}\}$ bounded above from one
and $\lim_{n \to \infty}\beta_{n} = 1$, then the sequence
$\{x_{n}\}$ converges strongly to $P_{F(S )}x_{0}$.
\par
In 2007, Tada and Takahashi \cite{a27} for finding an element of $EP(f)\cap F(S)$, introduced the following iterative scheme by using the hybrid projection method for a nonexpansive self-mapping $S$ of a nonempty, closed convex subset $C$ in a Hilbert space:
\begin{equation*}
\begin{cases}
&x_{0} \in H\,\,\,\hbox{chosen arbitrarily},\\
 &u_n\in E\;\text{such that}\quad f(u_n,y) +\frac{1}{r_n}\langle y-u_n,u_n-x_n\rangle\geq0,\quad\forall\: y\in E\\
 & w_n=\alpha_n x_n+(1-\alpha_n)Su_n,\\
 &C_n=\lbrace u\in H:\|w_n-u\|\leq\|x_n-u\|\rbrace,\\
 &Q_n=\lbrace u\in H:\langle x_n-u,x_n-x\rangle\leq0\rbrace,\\
 &x_{n+1}=P_{C_n\cap Q_n}x,
  \end{cases}
  \end{equation*}
  for all $n\in\mathbb{N}\cup\{0\}$, where $\{\alpha_n\}\subset[a,b]$ for some $a,b\in(0,1)$ and $\{r_n\}\subset(0,\infty)$ satisfies $\liminf_{n\rightarrow\infty}r_n>0.$ Thus, they proved that $\{x_n\}$ and  $\{u_n\}$ converge strongly to $u$, where
  \linebreak
   $u=P_{F(S)\cap EP(f)}x_0\in F(S)\cap EP(f).$
   \par
   In 2009, Takahashi and Zembayashi \cite{T4}  presented a new hybrid iterative method for finding an element of $EP(f)\cap F(S)$,  by using the hybrid projection method for relatively nonexpansive self-mapping $S$ of $C$ in a Banach space $E$:
\begin{equation*}
\begin{cases}
&x_{0} \in C\,\,\,\hbox{chosen arbitrarily},\\
& y_n=J^{-1}(\alpha_n Jx_n+(1-\alpha_n)JSx_n),\\
 &u_n\in C\;\text{such that}\quad f(u_n,y) +\frac{1}{r_n}\langle y-u_n,Ju_n-Jy_n\rangle\geq0,\quad\forall\: y\in C\\
 &H_n=\lbrace u\in C:\|w_n-u\|\leq\|x_n-u\|\rbrace,\\
 &W_n=\lbrace u\in C:\langle x_n-u,Jx_n-Jx\rangle\leq0\rbrace,\\
 &x_{n+1}=P_{H_n\cap W_n}x,
  \end{cases}
  \end{equation*}
  for all $n\in\mathbb{N}\cup\{0\}$, where $J$ is the duality mapping on $E$, $\{\alpha_{n}\}$ is a sequence in $[0,1]$ such that $\lim_{n \to \infty}\alpha_{n}(1-\alpha_{n})>0$ and $\{r_n\}$ is a sequence in $[a,\infty)$  for some $a>0$. They proved that the sequence $\{x_{n}\}$ converges strongly to $\Pi_{F(S )\cap EP(f)}x$, where $\Pi_{F(S )\cap EP(f)}$ is the generalized projection from $C$ onto $F(S)\cap EP(f)$.
  \par
In this paper, employing the idea of Nakajo-Takahashi \cite{Nakajo} and  Takahashi-Zembayashi \cite{T4}, we modify Ishikawa iteration process for finding a common element of the set of solution of an equilibrium problem and the set of fixed points of a relatively nonexpansive mapping by applying the hybrid projection method in a Banach space.
%%%%%%%%%%%%%%%%%%%%%%%%%%%%%%%%%%%%%%%%%%%%%%%%%%%%%%%%%%%%%%%
\section{Preliminaries}
  Let $E$ be a real Banach space with $\|.\|$ and dual space $E^{*}$. We denote the weak convergence and the strong convergence of $\{x_n\}$ to $x\in E$ by $x_n\rightharpoonup x$ and $x_n\rightarrow x$, respectively and denote by $J$ the normalized duality mapping from $E$ into $2^{E^{*}}$ defined by
 $$Jx=\{x^{*}\in E^{*}:\; \langle x,x^{*}\rangle=\|x\|^2=\|x^{*}\|^2\}$$
 for all $x\in E$, where $\langle.,.\rangle$ denotes the generalized duality pairing between $E$ and $E^{*}$. A Banach space $E$ is said to be strictly convex if  $\|\frac{x+y}{2}\|<1$ for all
  $x,y\in E$ with $\|x\|=\|y\|=1$ and $x\neq y.$ It is also said to be uniformly convex if for every $\epsilon\in (0,2]$,
  there exists a $\delta>0$, such that $\|\frac{x+y}{2}\|<1-\delta$ for all $x,y\in E$ with $\|x\|=\|y\|=1$ and
  $\|x-y\|\geq\epsilon$. Furthermore, $E$ is called smooth if the limit
 \begin{equation}\label{eq1}
 \lim_{t\rightarrow0}\frac{\|x+ty\|-\|x\|}{t}
 \end{equation}
 exists for all $x,y\in B_{E}=\{x\in E:\;\|x\|=1\}$. It is also said to be uniformly smooth if the limit
  $(\ref{eq1})$ is attained uniformly for all $x,y\in E$. A Banach space $E$ is said to have the Kadec--Klee property if for every sequence $\{x_n\}$ in $E$ that $x_n\rightharpoonup x$ and $\|x_n\|\rightarrow\|x\|$ then $x_n\rightarrow x$. It is known that if $E$ is uniformly convex, then $E$ has the Kadec--Klee property. Also, $E$ is uniformly convex if and only if $E^*$ is uniformly smooth. For more details see \cite{Agarwal, T3}.
\par
 Many properties of the normalized duality mapping $J$ have been given in \cite{Agarwal, T3}. We give some of those in the following:
  \begin{enumerate}
\item $J(0) =\{0\}$.
\item For every $x\in E$, $Jx$ is nonempty closed convex and bounded subset of $E^*$.
\item  If $E^*$ is strictly convex, then $J$ is single-valued.
\item If $E$ is strictly convex, then $J$ is one-one, i.e., if $x\neq y$ then $ Jx\cap Jy=\phi$.
\item If $E$ is reflexive, then $J$ is onto.
\item If $E$ is smooth, then $J$ is single-valued.
\item if $E$ is smooth and reflexive, then $J$ is norm-to-weak continuous, that is, $Jx_n\rightharpoonup Jx$ whenever $x_n\rightarrow x$.
\item If $E$ is smooth, strictly convex  and reflexive  and $J^*:E^*\rightarrow 2^E$ is the normalized duality mapping on $E^*$, then $J^{-1}=J^*$, $JJ^*=I_{E^*}$ and $J^*J=I_{E}$, where $I_{E}$ and $I_{E^*}$ are the identity mapping on $E$ and $E^*$, respectively.
\item If $E$ is  uniformly convex and uniformly smooth, then $J$ is uniformly norm-to-norm continuous on bounded sets of $E$ and $J^{-1}=J^*$ is also uniformly norm-to-norm continuous on bounded sets of $E^*$.
\end{enumerate}
\par
Let C be a nonempty, closed convex subset  of a smooth, strictly convex and  reflexive  Banach space $E$. We denote by $\phi$ the function $\phi:E\times E\rightarrow\mathbb{R}$ defined as follows:
$$\phi(x,y)=\|x\|^2-\langle x,Jy\rangle+\|y\|^2,$$
for all $x,y\in E$. It is clear from the definition of the function $\phi$ that for all $x,y,z\in E,$
\begin{enumerate}
\item $(\|y\|-\|x\|)^2\leq\phi(x,y)\leq(\|y\|+\|x\|)^2$,
\item $\phi(x,y)=\phi(x,z)+\phi(z,y)+2\langle x-z,Jz-Jy\rangle.$
\end{enumerate}
Observe that if $E$ is in a Hilbert space then $\phi(x,y)=\|x-y\|^2$.
\par
In 1996, Alber \cite{Alber}, defined the generalized projection mapping as follows:
\begin{df}
Let C be a nonempty, closed convex subset  of a smooth, strictly convex and  reflexive  Banach space $E$.  The generalized projection $\Pi_C:E\rightarrow C$ is a mapping that assigns to an arbitrary point $x\in E$ the minimum point of the functional $\phi(x, y)$,  i.e., $\Pi_C x=x_0$, where $x_0$ is the solution to the minimization problem
$$\phi(x_0, x)=\inf_{y\in C}\phi(y, x).$$
\end{df}
Existence and uniqueness of the operator $\Pi_C$ follows from the properties of the functional  $\phi(x, y)$ and strict monotonicity of the mapping $J$.
\par
Let $S$ be a self-mapping of $C$. A point $p$ in $C$ is said to be an asymptotic fixed point of $S$ \cite{Reich}, if there exists a sequence $\{x_n\}$ in $C$ such that $x_n\rightharpoonup p$ and $\|x_n-Sx_n\|\rightarrow0$. we denote by $\hat{F}(S)$ the set of all asymptotic fixed points of $S$. A self-mapping $S$ of $C$ is said to be relatively nonexpansive [5-7], if the following conditions are satisfied:
\begin{enumerate}
\item $F(S)$ is nonempty;
\item $\phi(u,Sx)\leq\phi(u,x)$,  $\forall u\in F(S)$, $\forall  x\in C$;
\item  $F(S)=\hat{F}(S)$.
\end{enumerate}
\begin{lm}\cite{Mat}
Let $C$ be a nonempty, closed convex subset of a smooth, strictly convex and reflexive Banach space $E$, and let
$S$ be a relatively nonexpansive self-mapping of $C$. Then $F(S)$ is closed and convex.
\end{lm}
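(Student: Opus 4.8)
The plan is to reduce both assertions to the single basic fact that, in a smooth, strictly convex and reflexive Banach space, $\phi(x,y)=0$ if and only if $x=y$. To establish the nontrivial direction, suppose $\phi(x,y)=0$. The lower estimate in property (1) of $\phi$ gives $(\|y\|-\|x\|)^2\le\phi(x,y)=0$, so $\|x\|=\|y\|$. Since $E$ is smooth, $J$ is single-valued, and the definition of $\phi$ then forces $\langle x,Jy\rangle=\|x\|^2=\|y\|^2=\|Jy\|^2$, which says precisely that $Jy\in Jx$. Because $E$ is strictly convex, $J$ is one-one (property (4)), so $Jx\cap Jy\neq\emptyset$ yields $x=y$. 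I would record this as a short preliminary observation, since it is invoked in both parts.

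For closedness, I would take a sequence $\{p_n\}\subset F(S)$ with $p_n\to p$; as $C$ is closed, $p\in C$. Applying the defining inequality of a relatively nonexpansive map with $u=p_n\in F(S)$ and $x=p$ gives $\phi(p_n,Sp)\le\phi(p_n,p)$. Letting $n\to\infty$ and using the norm continuity of $\phi$ together with $\phi(p,p)=0$, I obtain $\phi(p,Sp)\le 0$; since $\phi\ge 0$ this forces $\phi(p,Sp)=0$, whence $Sp=p$ by the preliminary observation. Thus $p\in F(S)$, and $F(S)$ is closed.

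For convexity, fix $p,q\in F(S)$ and $t\in(0,1)$, and set $w=tp+(1-t)q\in C$. The key is the algebraic identity
$$\phi(w,Sw)=\big[t\,\phi(p,Sw)+(1-t)\,\phi(q,Sw)\big]-\big[t\,\phi(p,w)+(1-t)\,\phi(q,w)\big],$$
which I would verify by expanding each $\phi$ via its definition: the terms $t\|p\|^2+(1-t)\|q\|^2$ cancel between the two brackets, linearity of $\langle\,\cdot\,,J(Sw)\rangle$ in the first argument collapses $t\langle p,\cdot\rangle+(1-t)\langle q,\cdot\rangle$ into $\langle w,\cdot\rangle$, and the identity $\langle w,Jw\rangle=\|w\|^2$ disposes of the remaining $\phi(\cdot,w)$ terms. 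With this identity in hand, I apply the relatively nonexpansive inequality twice, $\phi(p,Sw)\le\phi(p,w)$ and $\phi(q,Sw)\le\phi(q,w)$, to conclude that the right-hand side is $\le 0$; hence $\phi(w,Sw)\le 0$, so $\phi(w,Sw)=0$ and $Sw=w$. Therefore $w\in F(S)$, and $F(S)$ is convex.

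I expect the main obstacle to be the convexity part, specifically discovering and verifying the identity above, which is what allows the relatively nonexpansive inequality to be used at the two extreme points $p,q$ rather than at $w$ directly. The closedness argument and the characterization $\phi(x,y)=0\Leftrightarrow x=y$ are comparatively routine, though the latter is where smoothness and strict convexity are genuinely used.
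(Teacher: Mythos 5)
Your proof is correct and coincides with the argument the paper delegates to the cited reference of Matsushita and Takahashi: closedness via $\phi(p,Sp)\le\phi(p,p)=0$ along a convergent sequence of fixed points, and convexity via expanding $\phi(w,Sw)$ at $w=tp+(1-t)q$ so that relative nonexpansiveness at $p$ and $q$ forces $\phi(w,Sw)\le 0$, with both parts closed out by the characterization $\phi(x,y)=0\Leftrightarrow x=y$, which uses smoothness and strict convexity exactly as you indicate. Your only cosmetic deviation is packaging the convexity computation as an explicit identity (and correctly using the intended definition $\phi(x,y)=\|x\|^2-2\langle x,Jy\rangle+\|y\|^2$, where the paper's displayed formula drops the factor $2$), which if anything makes the argument cleaner than the original.
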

\begin{lm}\cite{Alber}\label{lm.1}
Let $C$ be a nonempty, closed convex subset of smooth, strictly convex and reflexive Banach space $E$. Then
$$\phi(x, \Pi_{C}y)+\phi(\Pi_{C}y,y)\leq\phi(x,y),$$
for all $x\in C$ and all $y\in E$.
\end{lm}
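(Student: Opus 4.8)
The plan is to reduce the inequality to the variational (obtuse-angle) characterization of the generalized projection and then read off the conclusion from the three-point identity for $\phi$. Writing $z=\Pi_{C}y$, property~(2) of $\phi$ stated in the preliminaries gives
$$\phi(x,y)=\phi(x,z)+\phi(z,y)+2\langle x-z,\,Jz-Jy\rangle$$
for every $x\in C$. Thus the asserted inequality $\phi(x,z)+\phi(z,y)\le\phi(x,y)$ is \emph{equivalent} to the single scalar estimate
$$\langle x-z,\,Jz-Jy\rangle\ge 0,\qquad x\in C,$$
so the entire proof comes down to establishing this one inequality.

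To obtain it, I would use that, by definition, $z=\Pi_{C}y$ minimizes the functional $w\mapsto\phi(w,y)$ over $C$. Fix $x\in C$; by convexity of $C$ the points $z_{t}=z+t(x-z)$ lie in $C$ for $t\in[0,1]$. Since $E$ is smooth, the norm is G\^{a}teaux differentiable and $\tfrac12\|\cdot\|^{2}$ has gradient $J$, so the map $t\mapsto\phi(z_{t},y)$ is differentiable from the right at $t=0$ with derivative $2\langle x-z,\,Jz-Jy\rangle$ (the term $\|y\|^{2}$ is constant and $w\mapsto\langle w,Jy\rangle$ is linear). Because $z$ is a minimizer, this right derivative is nonnegative, which is exactly the estimate displayed above.

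Substituting this sign information back into the three-point identity makes the cross term nonnegative and yields $\phi(x,\Pi_{C}y)+\phi(\Pi_{C}y,y)\le\phi(x,y)$, as required.

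The step I expect to be the main obstacle is the middle one: making the variational inequality rigorous. Two points need care. First, one must know $\Pi_{C}y$ exists and is unique; this is guaranteed by strict convexity, reflexivity and smoothness of $E$ (as noted immediately after the definition of $\Pi_{C}$) together with the coercivity and weak lower semicontinuity of $\phi(\cdot,y)$. Second, the one-sided differentiation of $t\mapsto\phi(z_{t},y)$ must be justified through the G\^{a}teaux differentiability of $\|\cdot\|^{2}$ with gradient $2J$, which is precisely where smoothness of $E$ is used; once this derivative computation is in hand, minimality of $z$ converts it into the sign condition, and the remainder is the purely algebraic substitution above.
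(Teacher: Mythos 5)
Your argument is correct, and there is nothing internal to compare it against: the paper states this lemma without proof, quoting it from Alber \cite{Alber}. Your route is in fact the classical one from that source. Note, though, that after the three-point identity reduces everything to the single inequality $\langle x-\Pi_{C}y,\,J\Pi_{C}y-Jy\rangle\geq0$ for $x\in C$, this is exactly the forward implication of Lemma \ref{lm.2} of the paper (with the letters renamed: there $z=\Pi_{C}x$ gives $\langle y-z,Jx-Jz\rangle\leq0$ for all $y\in C$), so within this paper you could finish in one line by citing that lemma instead of re-deriving it. Your one-sided differentiation of $t\mapsto\phi(z+t(x-z),y)$ is a correct self-contained proof of that half of Lemma \ref{lm.2}: the right derivative at $t=0$ is indeed $2\langle x-z,Jz-Jy\rangle$, since smoothness makes $\|\cdot\|^{2}$ G\^{a}teaux differentiable with gradient $2J$, and minimality of $z$ over the convex set $C$ forces this derivative to be nonnegative; this is where smoothness genuinely enters, and your treatment of existence and uniqueness of $\Pi_{C}y$ matches the remark the paper makes after the definition. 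One small caution: your computations use the standard normalization $\phi(x,y)=\|x\|^{2}-2\langle x,Jy\rangle+\|y\|^{2}$, whereas the displayed definition in the preliminaries omits the factor $2$; that is a typo in the paper (its own property (2), the three-point identity you invoke, is valid only with the factor $2$), so your proof is consistent with the intended definition.
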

%%%%%%%%%%%%%%%%%%%%%%%%%%%%%%%%%%%%%%%%%%%%%%%%%%%%%
\begin{lm}\cite{Alber}\label{lm.2}
Let $C$ be a nonempty, closed convex subset of smooth, strictly convex and reflexive Banach space $E$, $x\in E$ and $z\in C$. Then
$$z=\Pi_{C}x\Longleftrightarrow\langle y-z, Jx-Jz\rangle\leq0,$$
for all $y\in C$.
\end{lm}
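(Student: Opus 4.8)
The plan is to treat the two implications of the biconditional separately, exploiting throughout the three-point identity $\phi(a,b)=\phi(a,c)+\phi(c,b)+2\langle a-c,Jc-Jb\rangle$ (property (2) of $\phi$) together with the nonnegativity $\phi(a,b)\geq(\|a\|-\|b\|)^2\geq0$ (property (1)), and recalling that by definition $\Pi_C x$ is the unique minimizer of $y\mapsto\phi(y,x)$ over $C$.

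For the necessity ($\Rightarrow$), I would assume $z=\Pi_C x$, so that $\phi(z,x)\leq\phi(w,x)$ for every $w\in C$. Fixing an arbitrary $y\in C$ and $t\in(0,1]$, convexity of $C$ gives $y_t:=z+t(y-z)\in C$, whence $\phi(z,x)\leq\phi(y_t,x)$. Applying the identity with $a=y_t$, $b=x$, $c=z$ and cancelling the common term $\phi(z,x)$ yields
$$0\leq\phi(y_t,z)+2t\langle y-z,\,Jz-Jx\rangle.$$
The essential point is that smoothness of $E$ makes the norm Gateaux differentiable, so that $\|z+t(y-z)\|^2=\|z\|^2+2t\langle y-z,Jz\rangle+o(t)$ as $t\to0^+$; substituting this into the expression for $\phi(y_t,z)$ shows $\phi(y_t,z)=o(t)$. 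Dividing the displayed inequality by $t$ and letting $t\to0^+$ then leaves $0\leq2\langle y-z,\,Jz-Jx\rangle$, i.e.\ $\langle y-z,\,Jx-Jz\rangle\leq0$, and $y$ was arbitrary.

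For the sufficiency ($\Leftarrow$), I would assume $\langle y-z,\,Jx-Jz\rangle\leq0$ for all $y\in C$ and show $z$ realizes the defining infimum. Invoking the identity once more with $a=y$, $b=x$, $c=z$, for every $y\in C$ we have $\phi(y,x)=\phi(y,z)+\phi(z,x)+2\langle y-z,\,Jz-Jx\rangle$. By hypothesis the last term equals $-2\langle y-z,\,Jx-Jz\rangle\geq0$, and property (1) gives $\phi(y,z)\geq0$; hence $\phi(y,x)\geq\phi(z,x)$ for all $y\in C$. Thus $z$ attains $\inf_{y\in C}\phi(y,x)$, and since this minimizer is unique (by strict convexity of $E$ and strict monotonicity of $J$, as noted after the definition of $\Pi_C$), we conclude $z=\Pi_C x$.

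The hard part will be the necessity direction, where the passage from pointwise minimality to the first-order variational inequality is where the geometry of $E$ genuinely enters: one must use smoothness to justify the one-sided differentiation and, in particular, to verify that the remainder $\phi(y_t,z)$ is $o(t)$ so that it drops out in the limit. The sufficiency direction, by contrast, is purely algebraic once the three-point identity and the nonnegativity of $\phi$ are in hand, and requires no differentiability beyond the single-valuedness of $J$ guaranteed by smoothness.
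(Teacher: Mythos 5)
Your proof is correct. Note that the paper states this lemma without proof, citing Alber, so there is no in-text argument to compare against; your route --- the three-point identity $\phi(y,x)=\phi(y,z)+\phi(z,x)+2\langle y-z,Jz-Jx\rangle$ for sufficiency, and for necessity the one-sided differentiation of $t\mapsto\phi(z+t(y-z),x)$ at $t=0^+$, where smoothness of $E$ (Gateaux differentiability of $\tfrac12\|\cdot\|^2$ with derivative $Jz$) yields $\phi(y_t,z)=o(t)$ --- is precisely the classical argument from Alber's paper, with all hypotheses (smoothness for single-valuedness and differentiability, strict convexity and reflexivity for existence and uniqueness of the minimizer) invoked where they are genuinely needed.
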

%%%%%%%%%%%%%%%%%%%%%%%%%%%%%%%%%%%%%%%%%%%%%%%%%%%%%%%
\begin{lm}\cite{Kam}\label{lm.4}
Let $E$ be a smooth and uniformly convex Banach space and let $\{x_n\}$ and $\{y_n\}$ be sequences in $E$ such that either $\{x_n\}$ or $\{y_n\}$  is bounded. If $lim_{n\rightarrow\infty}\phi(x_n,y_n)=0$, then $lim_{n\rightarrow\infty}\|x_n-y_n\|=0$.
\end{lm}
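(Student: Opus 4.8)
The plan is to establish the statement by contradiction, through a normalization argument that reduces everything to the modulus of uniform convexity of $E$ and to property (1) of $\phi$. First I would settle boundedness: property (1) gives $(\|x_n\|-\|y_n\|)^2\le\phi(x_n,y_n)\to0$, so $\big|\,\|x_n\|-\|y_n\|\,\big|\to0$; combined with the assumption that one of the two sequences is bounded, this forces both $\{x_n\}$ and $\{y_n\}$ to be bounded.

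Then, assuming for contradiction that $\|x_n-y_n\|\not\to0$, I would pass to a subsequence (relabelled) with $\|x_n-y_n\|\ge\varepsilon$ for some $\varepsilon>0$ and with $\|x_n\|\to a$; the first step gives $\|y_n\|\to a$ as well. The case $a=0$ is immediately contradictory, since then $\|x_n-y_n\|\le\|x_n\|+\|y_n\|\to0$, so $a>0$ and the normalizations $u_n=x_n/\|x_n\|$, $v_n=y_n/\|y_n\|$ are well defined for large $n$. Writing $\phi(x,y)=\|x\|^2-2\langle x,Jy\rangle+\|y\|^2$ and using $J(\lambda x)=\lambda Jx$ for $\lambda\ge0$, one gets $\|u_n\|=\|v_n\|=1$ and
\[
\langle u_n,Jv_n\rangle=\frac{\|x_n\|^2+\|y_n\|^2-\phi(x_n,y_n)}{2\|x_n\|\,\|y_n\|}\longrightarrow\frac{2a^2}{2a^2}=1.
\]

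The heart of the proof is to play this limit against uniform convexity. A short estimate, using $\big|\,\|x_n\|-\|y_n\|\,\big|\to0$ and $\|x_n-y_n\|\ge\varepsilon$, shows $\liminf_n\|u_n-v_n\|\ge\varepsilon/a>0$, so uniform convexity supplies a $\delta>0$ with $\|u_n+v_n\|\le2(1-\delta)$ for all large $n$. On the other hand, since $\langle v_n,Jv_n\rangle=\|v_n\|^2=1$,
\[
\langle u_n+v_n,Jv_n\rangle=\langle u_n,Jv_n\rangle+1\longrightarrow2,
\]
while $\langle u_n+v_n,Jv_n\rangle\le\|u_n+v_n\|\,\|Jv_n\|=\|u_n+v_n\|\le2(1-\delta)$. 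Letting $n\to\infty$ gives $2\le2(1-\delta)$, a contradiction, so $\|x_n-y_n\|\to0$.

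I expect the main obstacle to be the normalization itself: one must check that the denominators stay bounded away from $0$ (which is exactly why the case $a=0$ is isolated first) and that passing from $x_n,y_n$ to the unit vectors $u_n,v_n$ preserves a uniform separation $\|u_n-v_n\|\ge c>0$, since that separation is what makes uniform convexity usable. The remaining steps are routine bookkeeping with property (1) and the duality identities $\langle v_n,Jv_n\rangle=\|v_n\|^2$ and $J(\lambda x)=\lambda Jx$. Equivalently, one could quote the standard fact that on each closed ball there is a strictly increasing continuous convex $g$ with $g(0)=0$ and $g(\|x-y\|)\le\phi(x,y)$, whence the conclusion is immediate; the argument above is essentially a direct unpacking of that estimate.
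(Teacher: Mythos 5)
Your argument is correct, and every step checks out: property (1) of $\phi$ gives $\big|\,\|x_n\|-\|y_n\|\,\big|\to0$ and hence joint boundedness; the decomposition $x_n-y_n=\|x_n\|(u_n-v_n)+(\|x_n\|-\|y_n\|)v_n$ does yield $\liminf_n\|u_n-v_n\|\ge\varepsilon/a>0$, so uniform convexity is legitimately applicable to the unit vectors; and pairing $\langle u_n+v_n,Jv_n\rangle\to2$ against $\|u_n+v_n\|\,\|Jv_n\|=\|u_n+v_n\|\le2(1-\delta)$ closes the contradiction. (You also, correctly, worked with the standard definition $\phi(x,y)=\|x\|^2-2\langle x,Jy\rangle+\|y\|^2$; the paper's displayed definition omits the factor $2$, evidently a typo, since its own property (1) requires it.) Be aware, though, that there is no proof in this paper to compare against: the lemma is quoted from Kamimura and Takahashi \cite{Kam}. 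In that source the statement is deduced from the companion quantitative estimate that for each $r>0$ there is a strictly increasing, continuous, convex $g:[0,2r]\to\mathbb{R}$ with $g(0)=0$ and $g(\|x-y\|)\le\phi(x,y)$ for $x,y\in B_r$ --- the primal analogue of Lemma \ref{lm.5} stated here --- after which the lemma follows in two lines from your boundedness step. Your normalization argument is, as you observe in your closing remark, a direct unpacking of that estimate: what it buys is self-containedness (no auxiliary $g$-lemma, only the definition of uniform convexity and the homogeneity and duality identities for $J$); what the cited route buys is a reusable uniform inequality, which is the form actually exploited elsewhere in arguments like the proof of Theorem \ref{thm1}, where Lemma \ref{lm.5} plays exactly that role in the dual space.
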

%%%%%%%%%%%%%%%%%%%%%%%%%%%%%%%%%%%%%%%%%%%%%%%%%%%%%%%%%%
\begin{lm}\cite{Xu}\label{lm.5}
Let $E$ be a uniformly convex Banach space and  $r > 0$. Then there exists a strictly increasing, continuous and convex function $g: [0,2r]\rightarrow\mathbb{R}$ such that $g(0)=0$ and
$$\|tx+(1-t)y\|^2\leq t\|x\|^2+(1-t)\|y\|^2-t(1-t)g(\|x-y\|),$$
for all $x,y \in B_r$ and $t\in[0,1]$, where $B_r=\{z\in E:\|z\|\leq r\}.$
\end{lm}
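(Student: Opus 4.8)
The plan is to reduce the inequality to its midpoint case $t=\tfrac12$ by a convexity argument and then to extract $g$ from the uniform convexity of $E$. For fixed $x,y\in B_r$ put $\psi(t)=\|tx+(1-t)y\|^2$; since $\psi$ is a convex function of $t$ (a convex function precomposed with an affine map), the quantity $N(t):=t\|x\|^2+(1-t)\|y\|^2-\|tx+(1-t)y\|^2=t\psi(1)+(1-t)\psi(0)-\psi(t)$ is the gap between $\psi$ and its chord, hence a \emph{concave} function of $t$ vanishing at $t=0$ and $t=1$. For $t\in[0,\tfrac12]$ concavity gives $N(t)\ge 2t\,N(\tfrac12)$, and symmetrically on $[\tfrac12,1]$, so that $N(t)\ge 2\min\{t,1-t\}\,N(\tfrac12)\ge 2t(1-t)\,N(\tfrac12)$. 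Consequently, once a function $g_0$ with the required regularity is produced so that the midpoint estimate
$$N(\tfrac12)=\tfrac12\|x\|^2+\tfrac12\|y\|^2-\bigl\|\tfrac{x+y}{2}\bigr\|^2\ \ge\ \tfrac14\,g_0(\|x-y\|)$$
holds on $B_r$, the choice $g:=\tfrac12 g_0$ gives $t(1-t)g(\|x-y\|)\le N(t)$, which is exactly the assertion.

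For the midpoint estimate I would define, for $\epsilon\in(0,2r]$,
$$g_0(\epsilon)=\inf\bigl\{\,2\|x\|^2+2\|y\|^2-\|x+y\|^2:\ x,y\in B_r,\ \|x-y\|\ge\epsilon\,\bigr\},\qquad g_0(0)=0.$$
By $\|x+y\|\le\|x\|+\|y\|$ this is nonnegative, it is nondecreasing by construction, and it yields $4N(\tfrac12)\ge g_0(\|x-y\|)$ at once. The crux, and the main obstacle, is the strict positivity $g_0(\epsilon)>0$ for $\epsilon>0$, which is where uniform convexity enters. I would argue by contradiction from a sequence $x_n,y_n\in B_r$ with $\|x_n-y_n\|\ge\epsilon$ and $2\|x_n\|^2+2\|y_n\|^2-\|x_n+y_n\|^2\to0$. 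The elementary inequality $(\|x_n\|-\|y_n\|)^2\le 2\|x_n\|^2+2\|y_n\|^2-\|x_n+y_n\|^2$ forces $\|x_n\|-\|y_n\|\to0$, while $\|x_n\|+\|y_n\|\ge\|x_n-y_n\|\ge\epsilon$ keeps both norms bounded below by essentially $\epsilon/2$. Normalising to $u_n=x_n/\|x_n\|$ and $v_n=y_n/\|y_n\|$, one checks $\|u_n-v_n\|\ge\epsilon/(2r)=:\epsilon'$ for large $n$, so the modulus of convexity $\delta_E$ gives $\|\tfrac{u_n+v_n}{2}\|\le 1-\delta_E(\epsilon')<1$; substituting back shows the defect is bounded below by a positive multiple of $\epsilon^2\,\delta_E(\epsilon')$, contradicting its convergence to $0$. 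The delicate point is precisely the bookkeeping that excludes the degenerate regime in which the norms tend to $0$, which the constraint $\|x-y\|\ge\epsilon$ rules out.

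It remains to upgrade $g_0$, which a priori is only nondecreasing, nonnegative, positive on $(0,2r]$ and $0$ at $0$, to a strictly increasing, continuous, convex function. Since $g_0$ is nondecreasing it is bounded and integrable on $[0,2r]$, and the explicit choice
$$g(s):=\frac{1}{4r}\int_0^s g_0(u)\,du$$
does the job: $g$ is continuous with $g(0)=0$; being the integral of the nondecreasing, positive-on-$(0,2r]$ function $g_0/(4r)$ it is convex and strictly increasing; and $g(s)\le\frac{s}{4r}\,g_0(s)\le\tfrac12 g_0(s)$ because $s\le 2r$. Since lowering the function only relaxes the target inequality, $t(1-t)g(\|x-y\|)\le\tfrac12 t(1-t)g_0(\|x-y\|)\le N(t)$. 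Hence this $g$ meets all the requirements, and combined with the midpoint-to-general reduction of the first paragraph it completes the proof.
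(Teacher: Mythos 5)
Your proof is correct, but there is nothing in the paper to compare it against: the paper states this lemma without proof, importing it verbatim from Xu's article \emph{Inequalities in Banach spaces with application} (Nonlinear Anal.\ 16 (1991) 1127--1138), as the citation indicates. So your proposal is a genuinely self-contained reconstruction, and each step checks out. The chord-gap function $N(t)=t\|x\|^2+(1-t)\|y\|^2-\|tx+(1-t)y\|^2$ is indeed concave with $N(0)=N(1)=0$, giving $N(t)\ge 2\min\{t,1-t\}N(\tfrac12)\ge 2t(1-t)N(\tfrac12)$, so the midpoint reduction is valid. Your infimum $g_0$ satisfies $4N(\tfrac12)\ge g_0(\|x-y\|)$ by construction, and the contradiction argument for $g_0(\epsilon)>0$ is sound: $(\|x_n\|-\|y_n\|)^2\le D_n$ together with $\|x_n\|+\|y_n\|\ge\|x_n-y_n\|\ge\epsilon$ keeps both norms bounded below, the normalization then yields $\|u_n-v_n\|\ge(\epsilon-o(1))/r\ge\epsilon/(2r)$ eventually, and writing $x_n+y_n=\|x_n\|(u_n+v_n)+(\|y_n\|-\|x_n\|)v_n$ gives $D_n\ge 4\|x_n\|^2\,\delta_E(\epsilon')\bigl(2-\delta_E(\epsilon')\bigr)+o(1)$, which is bounded away from $0$, contradicting $D_n\to0$. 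The regularization $g(s)=\frac{1}{4r}\int_0^s g_0(u)\,du$ does everything you claim: it is convex as the integral of a nondecreasing function, strictly increasing and continuous since $g_0$ is bounded and positive on $(0,2r]$, and monotonicity of $g_0$ gives $g(s)\le\frac{s}{4r}g_0(s)\le\tfrac12 g_0(s)$, so the midpoint estimate propagates to $t(1-t)g(\|x-y\|)\le N(t)$. For context, Xu's original argument is both more general and differently organized: he proves an equivalence characterizing uniform convexity for every power $p>1$, with the weight $w_p(\lambda)=\lambda^p(1-\lambda)+\lambda(1-\lambda)^p$ (which reduces to $\lambda(1-\lambda)$ at $p=2$), working through the modulus of convexity directly rather than through a midpoint reduction. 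Your concavity trick plus integral smoothing buys a shorter, elementary derivation of exactly the $p=2$ statement the paper uses, at the cost of the generality and the converse direction that Xu establishes.
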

To study the equilibrium problem, for the bifunction $f:C\times C\rightarrow\mathbb{R}$, we assume that $f$ satisfies the following conditions:
\begin{enumerate}
\item[(A1)] $f(x,x)=0$ for all $x\in C;$
\item[(A2)]$f$ is monotone, i.e., $f(x,y)+f(y,x)\leq0$ for all $x,y\in C;$
\item[(A3)]for each $x,y,z\in C$,
$$\hspace{-4cm}
\lim_{t\downarrow0} f(tz+(1-t)x,y)\leq f(x,y);$$
\item[(A4)]for each $x\in C,\;y\mapsto f(x,y)$ is convex and lower semicontinuous.
\end{enumerate}
\par
The following lemma can be found in \cite{b}.
\begin{lm}\label{lm.6}
Let $C$ be a nonempty, closed convex subset of a smooth, strictly convex and reflexive Banach space $E$, $f$ be a bifunction from $C\times C$ to $\mathbb{R}$ satisfying $(A1)-(A4)$ and let $r > 0$ and $x\in E$. Then, there exists
$z\in C$ such that
$$f (z,y)+\frac{1}{r}\langle y-z,Jz-Jx\rangle\geq0,$$
for all $y\in C.$
\end{lm}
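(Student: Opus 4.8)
The plan is to recast the statement as a single equilibrium problem for a regularized bifunction and to solve it by the Ky Fan / KKM technique, using the reflexivity of $E$ to furnish the compactness. Fix $r>0$ and $x\in E$ and set
$$g(z,y)=f(z,y)+\tfrac{1}{r}\langle y-z,\,Jz-Jx\rangle,\qquad z,y\in C,$$
so that the conclusion is precisely the existence of $z\in C$ with $g(z,y)\ge 0$ for all $y\in C$. First I would record the properties of $g$ inherited from (A1)--(A4). One has $g(z,z)=0$; since $\langle y-z,\,Jz-Jy\rangle\le 0$ by monotonicity of $J$, the perturbation keeps monotonicity, i.e. $g(z,y)+g(y,z)\le 0$; the map $y\mapsto g(z,y)$ is convex and lower semicontinuous because the added term is affine and continuous in $y$; and (A3) persists because along the segment $t\mapsto tw+(1-t)z$ one has $tw+(1-t)z\to z$ in norm, whence $J(tw+(1-t)z)\to Jz$ in the weak$^{*}$ sense (smoothness), so the regularizing term is continuous in the relevant limit.

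Next I would introduce the two set-valued maps
$$G(y)=\{z\in C:\;g(z,y)\ge 0\},\qquad \widetilde G(y)=\{z\in C:\;g(y,z)\le 0\}.$$
The map $G$ is a KKM map: if $z=\sum_i\lambda_i y_i$ belonged to no $G(y_i)$, then $g(z,y_i)<0$ for each $i$, and convexity of $g(z,\cdot)$ together with $g(z,z)=0$ would give $0=g(z,z)\le\sum_i\lambda_i g(z,y_i)<0$, a contradiction. Monotonicity yields $G(y)\subseteq\widetilde G(y)$, so $\widetilde G$ is a KKM map as well, and each $\widetilde G(y)$ is convex and closed, hence weakly closed, since $g(y,\cdot)$ is convex and lower semicontinuous. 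A segment argument identifies the two intersections: if $g(y,z)\le 0$ for all $y$, then with $y_t=ty+(1-t)z$ the relation $0=g(y_t,y_t)\le t\,g(y_t,y)+(1-t)\,g(y_t,z)\le t\,g(y_t,y)$ gives $g(y_t,y)\ge 0$, and letting $t\downarrow 0$ and invoking (A3) yields $g(z,y)\ge 0$; thus $\bigcap_y G(y)=\bigcap_y\widetilde G(y)$.

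The crux, and the step I expect to be the main obstacle, is compactness, because $\widetilde G$ has weakly closed but unbounded values, so Ky Fan's lemma does not apply verbatim. I would exhaust $C$ by the bounded closed convex sets $C_n=\{z\in C:\|z\|\le n\}$, which are weakly compact by reflexivity, run Ky Fan's lemma for the maps restricted to each $C_n$ (where the values are now weakly compact) to produce $z_n\in C_n$ with $g(z_n,y)\ge 0$ for all $y\in C_n$, and then strip off the truncation by a coercivity estimate. The decisive inequality is that for a fixed $y_0$ and any $z\in G(y_0)$, monotonicity gives $f(y_0,z)\le\tfrac{1}{r}\langle y_0-z,\,Jz-Jx\rangle$, whose right-hand side behaves like $-\tfrac{1}{r}\|z\|^2$ as $\|z\|\to\infty$, while $f(y_0,\cdot)$, being convex and lower semicontinuous, is minorized by a continuous affine functional; comparing the quadratic decay against the affine minorant forces a bound $\|z_n\|\le R$ independent of $n$. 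Choosing $n>R$ and applying the convexity/segment argument once more upgrades $g(z_n,y)\ge 0$ from $y\in C_n$ to all $y\in C$, which delivers the required point $z$. The delicate point throughout is this coercivity bound: one must ensure that the quadratic growth supplied by the duality map genuinely dominates the affine minorant of $f(y_0,\cdot)$, and it is exactly here that the smoothness, strict convexity and reflexivity of $E$ enter through the properties of $J$.
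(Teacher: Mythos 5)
Your proposal is correct and takes essentially the same route as the paper's source: the paper gives no proof of this lemma at all, citing only Blum and Oettli \cite{b}, and their argument is precisely your scheme --- the KKM/Ky Fan lemma applied to the weakly closed convex values of $\widetilde G$, the Minty-type segment argument (via (A1), (A3), (A4)) to pass back from $\bigcap_y\widetilde G(y)$ to $\bigcap_y G(y)$, and reflexivity plus the coercivity of the perturbation, where the quadratic decay of $\tfrac{1}{r}\langle y_0-z,Jz-Jx\rangle$ dominates a continuous affine minorant of the proper convex lsc function $f(y_0,\cdot)$. Your verifications of the inherited hypotheses (monotonicity of $g$ from monotonicity of $J$, hemicontinuity from norm-to-weak$^{*}$ continuity of the single-valued $J$ on a smooth reflexive space, and the truncation-removal step using that $\|z_n\|\le R<n$ makes $z_n$ relatively interior to the ball) are all sound, so there is no gap.
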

 %%%%%%%%%%%%%%%%%%%%%%%%%%%%%%%%%%%%%%%%%%%%%%%%%%%%%%%%%%%%%%%%
\par
\begin{lm}\cite{T4}\label{lm.7}
 Let $C$ be a nonempty, closed convex subset of a smooth, strictly convex and reflexive Banach space $E$,  $f$ be a bifunction from $C\times C$ to $\mathbb{R}$ satisfying $(A1)-(A4)$ and let $r > 0$ and $x\in E$. Define a mapping $T_r :E\rightarrow C$ as follows:
$$\hspace{-2cm}T_r (x) = \{z\in C :f (z,y)+\frac{1}{r}\langle y-z,Jz-Jx\rangle\geq0,\; \forall y\in C\},$$
for all $x\in E$. Then, the following statements hold:

    \begin{enumerate}
      \item[(i)]$T_r$ is singel-valued;
      \item[(ii)]$T_r$ is firmly nonexpansive-type, i.e., for all $x,y\in E$,
      $$\langle T_rx-T_ry,JT_rx-JT_ry\rangle\leq\langle T_rx-T_ry,Jx-Jy\rangle;$$
      \item[(iii)]$F(T_r)=EP(f);$
      \item[(iv)]EP(f) is closed and convex and $T_r$ is relatively nonexpansive mapping.
    \end{enumerate}
    \end{lm}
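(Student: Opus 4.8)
The plan is to handle the four claims in the order (i)--(ii), (iii), (iv), since single-valuedness will drop out of the estimate that proves the firmly nonexpansive-type property. Well-definedness of $T_r$, i.e. that each value $T_rx$ is nonempty, is exactly Lemma~\ref{lm.6}, so I may take representatives freely. To get (ii), fix $x,y\in E$ and pick $z_1\in T_rx$, $z_2\in T_ry$. Substituting $w=z_2$ in the defining inequality for $z_1$ and $w=z_1$ in that for $z_2$, adding the two inequalities, and discarding the quantity $f(z_1,z_2)+f(z_2,z_1)$, which is $\le0$ by the monotonicity condition (A2), I arrive at
$$\langle z_2-z_1,\,(Jz_1-Jz_2)-(Jx-Jy)\rangle\ge0,$$
which rearranges to
$$\langle T_rx-T_ry,\,JT_rx-JT_ry\rangle\le\langle T_rx-T_ry,\,Jx-Jy\rangle .$$
Taking $x=y$ then forces $\langle z_1-z_2,\,Jz_1-Jz_2\rangle\le0$; since $J$ is monotone this inner product is also $\ge0$, hence $0$, and the strict convexity of $E$ (strict monotonicity of $J$) yields $z_1=z_2$, which is (i).

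Assertion (iii) is immediate from the definition: $u\in F(T_r)$ means $u\in T_ru$, i.e. $f(u,y)+\frac{1}{r}\langle y-u,Ju-Ju\rangle=f(u,y)\ge0$ for all $y\in C$, and this is precisely the statement $u\in EP(f)$.

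For (iv) I first establish the basic $\phi$-estimate. Put $p=T_rx$ and take any $u\in F(T_r)=EP(f)$. Applying (ii) with its second argument equal to the fixed point $u$ gives $\langle p-u,\,Jp-Jx\rangle\le0$. Combining this with the three-point identity (property (2) of $\phi$), namely $\phi(u,x)=\phi(u,p)+\phi(p,x)+2\langle u-p,\,Jp-Jx\rangle$, together with $\phi(p,x)\ge0$, yields
$$\phi(u,T_rx)\le\phi(u,x)\qquad(u\in F(T_r),\ x\in E).$$
It remains to prove $\hat F(T_r)=F(T_r)$. Let $p\in\hat F(T_r)$, so there is $\{x_n\}$ with $x_n\rightharpoonup p$ and $\|x_n-T_rx_n\|\to0$; writing $z_n=T_rx_n$ we get $z_n\rightharpoonup p$. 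From the defining inequality for $z_n$ combined with (A2) one obtains $f(y,z_n)\le\frac{1}{r}\langle y-z_n,\,Jz_n-Jx_n\rangle$ for all $y\in C$. Since $\|z_n-x_n\|\to0$ (hence $\|Jz_n-Jx_n\|\to0$) and $y\mapsto f(y,\cdot)$ is weakly lower semicontinuous by (A4), passing to the limit gives $f(y,p)\le0$ for all $y\in C$. A standard segment argument then upgrades this: evaluating $f(y_t,\cdot)$ at $y_t=ty+(1-t)p\in C$, using (A1) and the convexity in (A4) to get $f(y_t,y)\ge0$, and finally letting $t\downarrow0$ via (A3), produces $f(p,y)\ge0$ for all $y\in C$, i.e. $p\in EP(f)=F(T_r)$. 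Closedness and convexity of $EP(f)=F(T_r)$ then follow from the Matsushita--Takahashi lemma stated above (fixed-point sets of relatively nonexpansive maps are closed and convex).

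The main obstacle is this last step, $\hat F(T_r)=F(T_r)$, where the passage to the limit in the equilibrium inequality is delicate. It requires simultaneously controlling $\|Jz_n-Jx_n\|\to0$, which rests on the norm-to-norm continuity of $J$ on bounded sets, and exploiting the conditions (A1)--(A4) in exactly the right order, since $f$ need be neither continuous nor concave in its first variable. The two-stage device---first deriving $f(y,p)\le0$ and only then flipping it to $f(p,y)\ge0$ through the auxiliary points $y_t$---is precisely what circumvents the lack of regularity of $f$ in the first slot.
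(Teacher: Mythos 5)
Your parts (i), (ii) and (iii), and the $\phi$-estimate $\phi(u,T_rx)\le\phi(u,x)$ in (iv), are correct and coincide with the argument in the cited source: the paper itself gives no proof of this lemma (it is quoted from \cite{T4}), and there the symmetrization trick with (A2) yielding the firmly nonexpansive-type inequality, the specialization $x=y$ plus strict monotonicity of $J$ for single-valuedness, the trivial verification $F(T_r)=EP(f)$, and the three-point identity combined with $\langle T_rx-u,JT_rx-Jx\rangle\le0$ are exactly the computations used (going back to Blum--Oettli \cite{b}). Two small points: in (A4) it is the \emph{second} variable in which $f$ is convex and lower semicontinuous, so what you actually use is weak lower semicontinuity of $z\mapsto f(y,z)$ along $z_n\rightharpoonup p$ (your phrase ``$y\mapsto f(y,\cdot)$'' inverts this, though your use of it is right); and the assertion ``$T_r$ is relatively nonexpansive'' tacitly presupposes $EP(f)\neq\emptyset$, which is harmless since Theorem \ref{thm1} assumes $F(S)\cap EP(f)\neq\emptyset$.

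There is, however, a genuine gap in your proof of $\hat F(T_r)\subseteq F(T_r)$: you pass from $\|z_n-x_n\|\to0$ to $\|Jz_n-Jx_n\|\to0$ by ``norm-to-norm continuity of $J$ on bounded sets,'' but under the lemma's hypotheses ($E$ smooth, strictly convex, reflexive) this is not available. Smoothness and reflexivity give only norm-to-\emph{weak} continuity of $J$ (property (7) of the preliminaries); pointwise norm-to-norm continuity amounts to Fr\'{e}chet differentiability of the norm, and what your argument really needs --- \emph{uniform} norm-to-norm continuity on bounded sets, since $\{x_n\}$ and $\{z_n\}$ need not converge strongly, only their difference does --- characterizes uniformly smooth spaces (property (9)). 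Hence the limit passage $\frac{1}{r}\langle y-z_n,Jz_n-Jx_n\rangle\to0$, and with it $f(y,p)\le0$, is unjustified in the stated generality. The gap is confined to this one step: it vanishes if the hypotheses are strengthened to $E$ uniformly smooth and uniformly convex, which is the only setting in which the paper invokes the lemma (Theorem \ref{thm1}), so the application is unaffected; to get the lemma as stated one must instead argue via the theory of firmly nonexpansive-type mappings of Kohsaka and Takahashi (SIAM J. Optim. 19 (2008) 824--835), which yields $\hat F(T)=F(T)$ for such mappings in smooth, strictly convex, reflexive spaces without any norm continuity of $J$; your part (ii) then delivers (iv) directly, and the closedness and convexity of $EP(f)=F(T_r)$ via the Matsushita--Takahashi lemma \cite{Mat} applied to the restriction $T_r|_C$ goes through as you wrote it.
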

    %%%%%%%%%%%%%%%%%%%%%%%%%%%%%%%%%%%%%%%%%%%%%%%%%%%%%
  %  \begin{lm}\cite{}\label{lm.8}
   % Let $C$ be a nonempty closed convex subset of a smooth, strictly convex and reflexive Banach space $E$, let $f$ be %a bifunction from $C\times C$ to $\mathbb{R}$ satisfying $(A1)-(A4)$ and let $r > 0$. Then for $x\in E$ and $p\in %F(T_r)$,
   % $$\phi(p,T_rx)+\phi(T_rx,p)\leq\phi(p,x).$$
   % \end{lm}
%%%%%%%%%%%%%%%%%%%%%%%%%%%%%%%%%%%%%%%%%%%%%%%%%%%%%%%%%%%%%%%%
\section{Main Results}
In this section, we prove the strong convergence theorem for finding a common element of the set of solution of an equilibrium problem and the set of fixed points of a relatively nonexpansive  mapping.
 \begin{thm}\label{thm1}
 Let $C$ be a nonempty closed convex subset of uniformly smooth and uniformly convex Banach space $E$. Let $f$ be a bifunction from $C\times C$ to $\mathbb{R}$ satisfying $(A1)-(A4)$ and $S$ be a relatively nonexpansive self-mapping of $C$ with $F(S)\cap EP(f)\neq\phi$.
Assume that $0< a\leq\alpha_n\leq1$ and $\{r_n\}\subset(0,\infty)$ satisfies
$\liminf_{n\rightarrow\infty}r_n>0$ and  $\{\beta_n\}$ is sequence
in $[0,1]$ such that $\liminf_{n\rightarrow\infty}\beta_n(1-\beta_n)>0$. If $\{x_n\}$ and $\{u_n\}$ be sequences generated by
$x=x_1\in C$ and
\begin{equation*}
\begin{cases}
& z_n=J^{-1}(\beta_nJx_n+(1-\beta_n)JSx_n),\\
&u_n\in E\;\text{such that}\quad f(u_n,y) +\frac{1}{r_n}\langle y-u_n,Ju_n-Jz_n\rangle\geq0,\quad\forall\: y\in E\\
 & y_n=J^{-1}(\alpha_nJz_n+(1-\alpha_n)Ju_n),\\
 &C_n=\lbrace v\in C:\phi(v,y_n)\leq\phi(v,x_n)\rbrace,\\
 &Q_n=\lbrace z\in C:\langle x_n-z,Jx_n-Jx\rangle\leq0\rbrace,\\
 &x_{n+1}=\Pi_{C_n\cap Q_n}x,
  \end{cases}
\end{equation*}
for all $n\in\mathbb{N}$. Then, $\{x_n\}$ converges strongly to $\Pi_{F(S)\cap EP(f)}x$, where $\Pi_{F(S)\cap EP(f)}$ is the generalized projection of $E$ onto $F(S)\cap EP(f)$.
\end{thm}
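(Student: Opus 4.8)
The plan is to run the Nakajo--Takahashi / Takahashi--Zembayashi hybrid-projection scheme, writing $F=F(S)\cap EP(f)$ and $u_n=T_{r_n}z_n$ as in Lemma~\ref{lm.7}. First I would verify that every $C_n$ and $Q_n$ is closed and convex; for $C_n$ the defining inequality $\phi(v,y_n)\le\phi(v,x_n)$ is linear in $v$, namely $2\langle v,Jx_n-Jy_n\rangle\le\|x_n\|^2-\|y_n\|^2$. Then I would show by induction that $F\subseteq C_n\cap Q_n$ for all $n$. The inclusion $F\subseteq C_n$ uses the convexity estimate $\phi(p,J^{-1}(\lambda Ja+(1-\lambda)Jb))\le\lambda\phi(p,a)+(1-\lambda)\phi(p,b)$ chained with $\phi(p,Sx_n)\le\phi(p,x_n)$ (relative nonexpansiveness) and $\phi(p,u_n)\le\phi(p,z_n)$ (Lemma~\ref{lm.7}), which give $\phi(p,y_n)\le\phi(p,z_n)\le\phi(p,x_n)$; the inclusion $F\subseteq Q_n$ is the usual induction built on Lemma~\ref{lm.2}. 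Hence $C_n\cap Q_n$ is nonempty, closed and convex, so $x_{n+1}=\Pi_{C_n\cap Q_n}x$ is well defined.

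Next I would harvest the standard consequences of the projection. Since $Q_n$ is precisely the half-space for which $x_n=\Pi_{Q_n}x$ (Lemma~\ref{lm.2}) and $x_{n+1}\in C_n\cap Q_n\subseteq Q_n$, Lemma~\ref{lm.1} yields $\phi(x_{n+1},x_n)+\phi(x_n,x)\le\phi(x_{n+1},x)$, so $\{\phi(x_n,x)\}$ is nondecreasing; it is bounded above because $x_{n+1}$ minimises $\phi(\cdot,x)$ over a set containing $F$, whence $\phi(x_{n+1},x)\le\phi(p,x)$ for $p\in F$. Thus $\{\phi(x_n,x)\}$ converges and $\phi(x_{n+1},x_n)\to0$, so $\|x_{n+1}-x_n\|\to0$ by Lemma~\ref{lm.4}. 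From $x_{n+1}\in C_n$ I then get $\phi(x_{n+1},y_n)\le\phi(x_{n+1},x_n)\to0$, hence $\|x_{n+1}-y_n\|\to0$ and therefore $\|x_n-y_n\|\to0$.

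The core is to upgrade these to $\|x_n-Sx_n\|\to0$ and $\|u_n-z_n\|\to0$. Applying Lemma~\ref{lm.5} in the uniformly convex dual $E^{*}$ to the combination defining $z_n$ sharpens the previous bound to $\phi(p,y_n)\le\phi(p,x_n)-\beta_n(1-\beta_n)g(\|Jx_n-JSx_n\|)$; since the left side and $\phi(p,x_n)$ share the same limit, $\beta_n(1-\beta_n)g(\|Jx_n-JSx_n\|)\to0$, and $\liminf\beta_n(1-\beta_n)>0$ with the properties of $g$ forces $\|Jx_n-JSx_n\|\to0$, hence $\|x_n-Sx_n\|\to0$ and $\|x_n-z_n\|\to0$ by uniform continuity of $J^{-1}$. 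For the equilibrium part I would combine Lemma~\ref{lm.7}(ii), in the form $\phi(u_n,z_n)\le\phi(p,z_n)-\phi(p,u_n)$, with the bound $(1-\alpha_n)\bigl(\phi(p,z_n)-\phi(p,u_n)\bigr)\le\phi(p,z_n)-\phi(p,y_n)\to0$ coming from the definition of $y_n$; provided $\alpha_n$ stays bounded away from $1$ this gives $\phi(u_n,z_n)\to0$, hence $\|u_n-z_n\|\to0$ and $\|x_n-u_n\|\to0$. I expect this coupling to be the main obstacle: the fixed-point estimate lives on the $\beta_n$-combination and needs $\liminf\beta_n(1-\beta_n)>0$, whereas the equilibrium estimate needs the factor $(1-\alpha_n)$ bounded below (so the hypothesis on $\alpha_n$ must be read as $\limsup\alpha_n<1$, otherwise $y_n$ carries no information about $u_n$), and both must be kept alive at once.

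Finally I would identify the limit. Boundedness of $\{x_n\}$ in the reflexive space $E$ yields a subsequence $x_{n_k}\rightharpoonup v$, and then $z_{n_k},u_{n_k}\rightharpoonup v$. Since $\|x_{n_k}-Sx_{n_k}\|\to0$, $v$ is an asymptotic fixed point, so $v\in\hat F(S)=F(S)$. For $v\in EP(f)$ I would pass to the limit in the inequality defining $u_{n_k}$: by monotonicity (A2), $f(y,u_{n_k})\le\frac1{r_{n_k}}\langle y-u_{n_k},Ju_{n_k}-Jz_{n_k}\rangle\to0$ (using $\|u_{n_k}-z_{n_k}\|\to0$ and $r_{n_k}\ge c>0$), and weak lower semicontinuity of the convex, lsc map $f(y,\cdot)$ gives $f(y,v)\le0$ for all $y\in C$; replacing $y$ by $ty+(1-t)v$, using (A1) and (A4) and then letting $t\downarrow0$ through (A3) yields $f(v,y)\ge0$, i.e. $v\in EP(f)$. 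With $w=\Pi_Fx\in F\subseteq C_n\cap Q_n$, the bound $\phi(x_n,x)\le\phi(w,x)$ and weak lower semicontinuity of $\phi(\cdot,x)$ give $\phi(v,x)\le\lim_n\phi(x_n,x)\le\phi(w,x)$, while minimality of $w$ over $F$ gives the reverse inequality; uniqueness of the generalised projection forces $v=w=\Pi_Fx$. As every weak cluster point equals $\Pi_Fx$ we obtain $x_n\rightharpoonup\Pi_Fx$, and since $\phi(x_n,x)\to\phi(\Pi_Fx,x)$ forces $\|x_n\|\to\|\Pi_Fx\|$, the Kadec--Klee property of $E$ upgrades this to $x_n\to\Pi_Fx$ in norm.
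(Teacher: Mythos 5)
Your proposal is correct and follows the same hybrid-projection skeleton as the paper's proof: closedness/convexity of $C_n\cap Q_n$, the inductive inclusion $F(S)\cap EP(f)\subset C_n\cap Q_n$, monotonicity and boundedness of $\phi(x_n,x)$ via Lemmas \ref{lm.1} and \ref{lm.2}, $\phi(x_{n+1},x_n)\to0$ and $\phi(x_{n+1},y_n)\to0$ with Lemma \ref{lm.4}, the Xu-inequality (Lemma \ref{lm.5}) in $E^{*}$ to extract $\|Jx_n-JSx_n\|\to0$, and the same (A1)--(A4) limit argument plus Kadec--Klee ending. The one place you genuinely diverge is the step $\|u_n-z_n\|\to0$: the paper gets it by direct algebra on the duality map, writing $\|Jz_n-Ju_n\|=\frac{1}{1-\alpha_n}\|Jy_n-Jz_n\|$ and invoking a bound $\alpha_n<b<1$, whereas you use the firmly-nonexpansive-type inequality of Lemma \ref{lm.7}(ii) in the form $\phi(u_n,z_n)\le\phi(p,z_n)-\phi(p,u_n)$ combined with $(1-\alpha_n)\bigl(\phi(p,z_n)-\phi(p,u_n)\bigr)\le\phi(p,z_n)-\phi(p,y_n)\to0$, which is the original Takahashi--Zembayashi route; both arguments cost the same hypothesis, namely $1-\alpha_n$ bounded below. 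Your explicit warning about this is exactly on target and is the most valuable part of the proposal: the theorem only assumes $0<a\le\alpha_n\le1$, yet the paper's proof silently uses $\alpha_n<b<1$ with no such $b$ appearing in the hypotheses; if $\alpha_n\equiv1$ then $y_n=z_n$, the iterate $u_n$ never feeds back into the scheme, and convergence to $\Pi_{F(S)\cap EP(f)}x$ (rather than merely to a point of $F(S)$) cannot be expected --- a defect inherited by the paper's Corollary 3.3, which sets $\alpha_n=1$. So what you call ``the main obstacle'' is a genuine gap in the published proof, not in yours, provided the hypothesis is read as $\limsup_{n}\alpha_n<1$. Two harmless slips in your sketch: the sharpened estimate should carry the factor $\alpha_n$, i.e.\ $\phi(p,y_n)\le\phi(p,x_n)-\alpha_n\beta_n(1-\beta_n)g(\|Jx_n-JSx_n\|)$, which is precisely where the lower bound $\alpha_n\ge a>0$ is used; and ``the left side and $\phi(p,x_n)$ share the same limit'' should say their difference tends to $0$ (neither sequence need converge separately --- this is the content of the paper's estimate \eqref{inq.10}). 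Finally, your ending, which shows every weak cluster point equals $\Pi_{F(S)\cap EP(f)}x$ before applying Kadec--Klee, is slightly cleaner than the paper's, which concludes strong convergence of the full sequence from a single subsequence without comment.
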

\begin{proof}
First, we show that $C_n\cap Q_n$  is closed and convex. It is easily seen that $C_n$ is closed and  $Q_n$ is closed and convex for all $n\in \mathbb{N}$. Since
$$\phi(v,y_n)\leq\phi(v,x_n)\Longleftrightarrow \|y_n\|^2-\|x_n\|^2-2\langle v,Jy_n-Jx_n\rangle\geq0,$$
for all $v\in C_n$. Then $v\in C_n$ is convex. So $C_n\cap Q_n$  is
closed and convex for all $n\in \mathbb{N}$. Let $u\in F(S)\cap
EP(f)$. From $u_n=T_{r_n}z_n$ and using Lemma \ref{lm.7}(iv), we can
conclude that $T_{r_n}$ are relatively nonexpansive, in addition,
$S$ is relatively nonexpansive, so by the convexity of $\|.\|^2$ we
get
\begin{equation}\begin{aligned}\label{inq.2}
\phi(u,u_n)&=\phi(u,T_{r_n}z_n)\\
&\leq\phi(u,z_n)\\
&=\phi(u,J^{-1}(\beta_nJx_n+(1-\beta_n)JSx_n)\\
&=\|u\|^2-2\langle u,\beta_nJx_n+(1-\beta_n)JSx_n\rangle +\|\beta_nJx_n+(1-\beta_n)JSx_n\|^2\\
&\leq\|u\|^2-2\beta_n\langle u,Jx_n\rangle -2(1-\beta_n)\langle u,JSx_n\rangle+\beta_n\|x_n\|^2+(1-\beta_n)\|Sx_n\|^2\\
&=\beta_n\phi(u,x_n)+(1-\beta_n)\phi(u,Sx_n)\\
&\leq\phi(u,x_n),
\end{aligned}
\end{equation}
and therefore
\begin{equation}\begin{aligned}\label{}
\phi(u,y_n)&=\phi(u,J^{-1}(\alpha_nJz_n+(1-\alpha_n)Ju_n)\\
&=\|u\|^2-2\langle u,\alpha_nJz_n+(1-\alpha_n)Ju_n\rangle +\|\alpha_nJz_n+(1-\alpha_n)Ju_n\|^2\\
&\leq\|u\|^2-2\alpha_n\langle u,Jz_n\rangle -2(1-\alpha_n)\langle u,Ju_n\rangle+\alpha_n\|z_n\|^2+(1-\alpha_n)\|u_n\|^2\\
&=\alpha_n\phi(u,z_n)+(1-\alpha_n)\phi(u,u_n)\\
&\leq\alpha_n\phi(u,x_n)+(1-\alpha_n)\phi(u,x_n)\\
&=\phi(u,x_n),
\end{aligned}
\end{equation}
So $u\in C_n$ for all $n\in \mathbb{N}$. Hence,
$$F(S)\cap EP(f)\subset C_n.$$
\par
Now, using induction, we will show  that $F(S)\cap EP(f)\subset C_n\cap Q_n$ for all $n\in\mathbb{N}$. For $n=1$, we have $x_1=x\in C$
   and $Q_1=C$, therefore $ F(S)\cap EP(f)\subset Q_1$ and hence  $F(S)\cap EP(f)\subset C_1\cap Q_1$. Assume that  $F(S)\cap EP(f)\subset C_n\cap Q_n$  for some $n$.
Since $x_{n+1}=\Pi_{ C_n\cap Q_n}x$, by Lemma \ref{lm.2}, we have
\begin{equation}\label{inq.4}
\langle x_{n+1}-z,Jx-Jx_{n+1}\rangle\geqslant0,
\end{equation}
for all $z\in C_n\cap Q_n.$ By the induction assumption, we have
$F(S)\cap EP(f)\subset C_n\cap Q_n$, so we can conclude that for all $u\in
F(S)\cap EP(f)$  the inequality $(\ref{inq.4})$  holds. On the other hand, by
the definition of $Q_{n+1}$,  we obtain $F(S)\cap EP(f)\subset Q_{n+1}$.
Therefore,\linebreak
  $F(S)\cap EP(f)\subset C_{n+1}\cap Q_{n+1}$. So $\lbrace
x_n\rbrace$ is well-defined.
\par
Notice that the definition of $Q_n$
implies $x_n=\Pi_{Q_n}x.$ By Lemma \ref{lm.1} we get
\begin{equation}\label{inq.5}
\phi(x_n,x)=\phi(\Pi_{Q_n}x,x)\leq\phi(u,x)-\phi(u,\Pi_{Q_n}x)\leq\phi(u,x),
\end{equation}
 for all $ u\in F(S)\cap EP(f)\subset Q_n$.
   This yields that $\lbrace \phi(x_n,x)\rbrace$ is bounded. Therefore
   $\{x_n\}$, $\{Sx_n\}$, $\{y_n\}$ and $\{z_n\}$ are  bounded.
   \par
    Since $x_{n+1}=\Pi_{C_n\cap Q_n}x\in C_n\cap Q_n\subset Q_n,$ and $x_{n}=\Pi_{Q_n}x$, from the definition of   $\Pi_{Q_n}$  we have
\begin{equation}\label{inq.0}
\phi(x_n,x)\leq\phi(x_{n+1},x),
\end{equation}
for all $n\in\mathbb{N}$. It follows from $(\ref{inq.5})$  and $(\ref{inq.0})$, the sequence $\{\phi(x_n,x)\}$ is bounded and
\linebreak
 nondecreasing . So, $\lim_{n\rightarrow\infty}\{\phi(x_n,x)\}$ exists. by using Lemma \ref{lm.1} and $x_{n}=\Pi_{Q_n}x$, we also get
\begin{equation}\begin{aligned}\label{eq.1}
\phi(x_{n+1},x_n)&=\phi(x_{n+1},\Pi_{Q_n}x)\\
&\leq\phi(x_{n+1},x)-\phi(\Pi_{Q_n}x,x)\\
&=\phi(x_{n+1},x)-\phi(x_{n},x),
\end{aligned}\end{equation}
for all $n\in\mathbb{N}$. This yields that
\begin{equation}\label{inq.6}
\lim_{n\rightarrow\infty}\phi(x_{n+1},x_{n})=0.
\end{equation}
Since $x_{n+1}=\Pi_{C_n\cap Q_n}x\in C_n,$ we have
\begin{equation}\label{eq.2}
\phi(x_{n+1},y_n)\leq\phi(x_{n+1},x_n),
\end{equation}
for all $n\in\mathbb{N}$. from $(\ref{inq.6})$ and $(\ref{eq.2})$ we have
\begin{equation}\label{eq.3}
\lim_{n\rightarrow\infty}\phi(x_{n+1},y_{n})=0.
\end{equation}
From $(\ref{inq.6})$ and $(\ref{eq.3})$, since $E$ is uniformly convex and smooth, from Lemma \ref{lm.4} we get
$$\lim_{n\rightarrow\infty}\|x_{n+1}-x_{n}\|=\lim_{n\rightarrow\infty}\|x_{n+1}-y_{n}\|=0.$$
So, we obtain
\begin{equation}\label{eq.4}
\lim_{n\rightarrow\infty}\|x_{n}-y_{n}\|=0.
\end{equation}
Since $J$ is uniformly norm-to-norm continuous on bounded sets, from $(\ref{eq.4})$ we get
$$\lim_{n\rightarrow\infty}\|Jx_{n}-Jy_{n}\|=0$$
Let $r=\sup_{n\in\mathbb{N}}\{\|x_n\|,\|Sx_n\|\}$. Since $E$ is a uniformly smooth Banach space, we can conclude that $E^{*}$ is a uniformly convex Banach space. So, by Lemma \ref{lm.5} there exists a continuous, strictly increasing and convex function $g:[0,2r]\rightarrow\mathbb{R}$ whit $g(0)=0$ and for $u\in F(S)\cap EP(f)$ we have
\begin{equation}\begin{aligned}\label{inq.7}
\phi(u,z_n)&=\phi(u,J^{-1}(\beta_nJx_n+(1-\beta_n)JSx_n)\\
&=\|u\|^2-2\langle u,\beta_nJx_n+(1-\beta_n)JSx_n\rangle +\|\beta_nJx_n+(1-\beta_n)JSx_n\|^2\\
&\leq\|u\|^2-2\beta_n\langle u,Jz_n\rangle -2(1-\beta_n)\langle u,JSu_n\rangle+\beta_n\|z_n\|^2+(1-\beta_n)\|Su_n\|^2\\
&\hspace{1cm}-\beta_n(1-\beta_n)g(\|Jx_n-JSx_n\|)\\
&=\beta_n\phi(u,x_n)+(1-\beta_n)\phi(u,Sx_n)-\beta_n(1-\beta_n)g(\|Jx_n-JSx_n\|)\\
&\leq\phi(u,x_n)-\beta_n(1-\beta_n)g(\|Jx_n-JSx_n\|),
\end{aligned}
\end{equation}
and therefore
\begin{equation}\begin{aligned}\label{inq.8}
\phi(u,y_n)&=\phi(u,J^{-1}(\alpha_nJz_n+(1-\alpha_n)Ju_n)\\
&\leq\alpha_n\phi(u,z_n)+(1-\alpha_n)\phi(u,x_n)\\
&\leq\alpha_n\phi(u,x_n)-\alpha_n\beta_n(1-\beta_n)g(\|Jx_n-JSx_n\|)+(1-\alpha_n)\phi(u,x_n)\\
&\leq\phi(u,x_n)-\alpha_n\beta_n(1-\beta_n)g(\|Jx_n-JSx_n\|),
\end{aligned}
\end{equation}
hence
$$\alpha_n\beta_n(1-\beta_n)g(\|Jx_n-JSx_n\|)\leq\phi(u,x_n)-\phi(u,y_n).$$
Since $0<a\leq\alpha_n\leq1$, it is easy to see that
\begin{equation}\label{inq.9}
a\beta_n(1-\beta_n)g(\|Jx_n-JSx_n\|)\leq\phi(u,x_n)-\phi(u,y_n).
\end{equation}
On the other hand, we have
\begin{equation}\begin{aligned}\label{inq.10}
\phi(u,x_n)-\phi(u,y_n)&=\|x_n\|^2-\|y_n\|^2-2\langle u,Jx_n-Jy_n\rangle\\
&\leq\vert\|x_n\|^2-\|y_n\|^2\vert+2\vert\langle u,Jx_n-Jy_n\rangle\vert\\
&\leq\vert\|x_n\|-\|y_n\|\vert(\|x_n\|+\|y_n\|)+2\|u\|\|Jx_n-Jy_n\|\\
&\leq\vert\|x_n-y_n\|\vert(\|x_n\|+\|y_n\|)+2\| u\|\|Jx_n-Jy_n\|,
\end{aligned}\end{equation}
so
\begin{equation}\label{eq.5}
\hspace{-7cm}\lim_{n\rightarrow\infty}(\phi(u,x_n)-\phi(u,y_n))=0
\end{equation}
Since $\liminf_{n\rightarrow\infty}\beta_n(1-\beta_n)>0$, it follows from (\ref{inq.9}) that
\begin{equation*}
\lim_{n\rightarrow\infty}g(\|Jx_n-JSx_n\|)=0.
\end{equation*}
Hence, from property of $g$ we get
\begin{equation}\label{eq.6}
\lim_{n\rightarrow\infty}\|JSx_n-Jx_n\|=0.
\end{equation}
Since $J^{-1}$ is also uniformly norm-to-norm continuous on bounded sets, we have
\begin{equation}\label{eq.11}
\lim_{n\rightarrow\infty}\|x_n-Sx_n\|=0.
\end{equation}
Notice that $Jz_n-Jx_n=(1-\beta_n)(JSx_n-Jx_n)$, from $(\ref{eq.6})$ we can conclude that
\begin{equation*}
\lim_{n\rightarrow\infty}\|Jz_n-Jx_n\|=\lim_{n\rightarrow\infty}(1-\beta_n)\|JSx_n-Jx_n\|=0.
\end{equation*}
Since $J^{-1}$ is also uniformly norm-to-norm continuous on bounded sets, we obtain
\begin{equation}\label{eq.6.}
\lim_{n\rightarrow\infty}\|z_n-x_n\|=0.
\end{equation}
Since
$$\|y_n-z_n\|\leq\|y_n-x_n\|+\|x_n-z_n\|,$$
from $(\ref{eq.4})$, $(\ref{eq.6.})$ and last inequality we have
\begin{equation}\label{eq.7.}
\lim_{n\rightarrow\infty}\|y_n-z_n\|=0.
\end{equation}
Since $J$ is uniformly norm-to-norm continuous on bounded sets from (\ref{eq.7.}), we get
\begin{equation}\label{eq.8.}
\lim_{n\rightarrow\infty}\|Jy_n-Jz_n\|=0.
\end{equation}
Noticing that $Jy_n=\alpha_nJz_n+(1-\alpha_n)Ju_n$ and $\alpha_n<b<1$, we obtain
$$\|Jz_n-Ju_n\|=\frac{1}{1-\alpha_n}\|Jy_n-Jz_n\|\leq\frac{1}{1-b}\|Jy_n-Jz_n\|.$$
From (\ref{eq.8.}) and last inequality we get
\begin{equation}\label{eq.9.}
\lim_{n\rightarrow\infty}\|Jz_n-Ju_n\|=0.
\end{equation}
Since $J^{-1}$ is also uniformly norm-to-norm continuous on bounded sets, we get
\begin{equation}\label{eq.10.}
\lim_{n\rightarrow\infty}\|z_n-u_n\|=0.
\end{equation}
Since $\{x_n\}$ is bounded, there is a subsequence $\{x_{n_k}\}$ of $\{x_n\}$ such that $x_{n_k}\rightharpoonup\hat{x}$.
\par
Since $S$ is relatively nonexpansive, we can conclude from $(\ref{eq.11})$ that $\hat{x}\in\hat{F}(S)=F(S)$. We show that $\hat{x}\in EP(f)$.
From $x_{n_k}\rightharpoonup\hat{x}$, $(\ref{eq.6.})$ and $(\ref{eq.10.})$, we have $z_{n_k}\rightharpoonup\hat{x}$ and $u_{n_k}\rightharpoonup\hat{x}$.
\par
Since $\liminf_{n\rightarrow\infty}r_n>0$, from $(\ref{eq.9.})$, we derive that
\begin{equation}\label{eq.8}
\lim_{n\rightarrow\infty}\Big\|\frac{Ju_n-Jz_n}{r_n}\Big\|=\lim_{n\rightarrow\infty}\frac{1}{r_n}\|Ju_n-Jz_n\|=0.
\end{equation}
\par
 Since $u_n=T_{r_n}z_n$, we get
 \begin{equation}\label{inq.11}
f(u_n,z)+\frac{1}{r_n}\langle z-u_n,Ju_n-Jz_n\rangle\geq0,
\end{equation}
for all $y\in C$. Substituting $n$ by $n_k$ in $(\ref{inq.11}$) and by using condition $(A2)$, we obtain
$$\frac{1}{r_{n_k}}\langle z-u_{n_k},Ju_{n_k}-Jz_{n_k}\rangle\geq -f(u_{n_k},y)\geq f(y,u_{n_k}),$$
for all $y\in C$.  Letting $k\rightarrow\infty$, it follows from (\ref{eq.8}) and condition $(A4)$ that
$$0\geq f(y,\hat{x}),$$
for all $y\in C$. Suppose that $t\in(0,1]$, $y\in C$ and  $y_t=ty+(1-t)\hat{x}.$ Therefore, $y_t\in C$ and so $f(y_t,\hat{x})\leq0$. Hence
$$0=f(y_t,y_t)\leq tf(y_t,y)+(1-t)f(y_t,\hat{x})\leq tf(y_t,y),$$
and dividing by t, we have $f(y_t,y)\geq0$, for all $y\in E$. By taking the limit as $t\downarrow0$ and using $(A3)$, we get $\hat{x}\in EP(f)$.
\par
put $\omega=\Pi_{F(S)\cap EP(f)}x$. From $x_{n+1}=\Pi_{C_n\cap Q_n}x$ and $\omega\in F(S)\cap EP(f)\subset C_n\cap Q_n$, we have
$$\phi(x_{n+1},x)\leq\phi(\omega,x).$$
Since $\|.\|$ is weakly lower semicontinuous, we get
\begin{equation*}\begin{aligned}
\phi(\hat{x},x)=&\|\hat{x}\|^2-\langle\hat{x},Jx\rangle+\|x\|^2\\
&\leq\liminf_{n\rightarrow\infty}(\|x_{n_k}\|^2-\langle x_{n_k},Jx\rangle+\|x\|^2)\\
&=\liminf_{n\rightarrow\infty}\phi(x_{n_k},x)\\
&\leq\limsup_{n\rightarrow\infty}\phi(x_{n_k},x)\\
&\leq\phi(\omega,x).
\end{aligned}\end{equation*}
By the definition of $\Pi_{F(S)\cap EP(f)}$, we get $\hat{x}=\omega$, so we get $\lim_{n\rightarrow\infty}\phi(x_{n_k},x)=\phi(\omega,x).$
Hence we obtain
\begin{equation*}\begin{aligned}
0&=\lim_{n\rightarrow\infty}(\phi(x_{n_k},x)-\phi(\omega,x))\\
&=\lim_{n\rightarrow\infty}(\|x_{n_k}\|^2-\|\omega\|^2-2\langle x_{n_k}-\omega,Jx\rangle)\\
&=\lim_{n\rightarrow\infty}(\|x_{n_k}\|^2-\|\omega\|^2).
\end{aligned}\end{equation*}
Since $E$ has Kadac--Klee property, we can conclude that
$x_{n_k}\rightarrow\omega=\Pi_{F(S)\cap EP(f)}x.$ Hence $\{x_n\}$ converges strongly to $\Pi_{F(S)\cap EP(f)}x$
\end{proof}
%%%%%%%%%%%%%%%%%%%%%%%%%%%%%%%%%%%%%%%%%%%%%%%%%%%%%%%
 \begin{cor}
 Let $C$ be a nonempty, closed convex subset of uniformly smooth and uniformly convex Banach space $E$ and $S$ be a relatively nonexpansive self-mapping of $C$ with $F(S)\neq\phi$.
Assume that $0<a\leq\alpha_n\leq1$ and  $\{\beta_n\}$ is sequence
in $[0,1]$ such that $\liminf_{n\rightarrow\infty}\beta_n(1-\beta_n)>0$. If $\{x_n\}$ and $\{u_n\}$ be sequences generated by
$x=x_1\in C$ and
\begin{equation*}
\begin{cases}
 & z_n=J^{-1}(\beta_nJx_n+(1-\beta_n)JSx_n),\\
 &u_n\in E\;\text{such that}\quad\langle y-u_n,Ju_n-Jz_n\rangle\geq0,\quad\forall\: y\in E\\
 & y_n=J^{-1}(\alpha_nJz_n+(1-\alpha_n)Ju_n),\\
 &C_n=\lbrace v\in C:\phi(v,y_n)\leq\phi(v,x_n)\rbrace,\\
 &Q_n=\lbrace z\in C:\langle x_n-z,Jx_n-Jx\rangle\leq0\rbrace,\\
 &x_{n+1}=\Pi_{C_n\cap Q_n}x,
  \end{cases}
\end{equation*}
for all $n\in\mathbb{N}$. Then, $\{x_n\}$ converges strongly to $\Pi_{F(S)}x$, where $\Pi_{F(S)}$ is the generalized  projection of $E$ onto $F(S)$.
\end{cor}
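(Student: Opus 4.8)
The plan is to deduce this Corollary directly from Theorem \ref{thm1} by specializing to the zero bifunction. I would set $f(x,y)=0$ for all $x,y\in C$ and fix any sequence $\{r_n\}\subset(0,\infty)$ with $\liminf_{n\to\infty}r_n>0$, for instance $r_n\equiv 1$; this sequence disappears from the iteration once $f\equiv 0$, so its particular value is irrelevant. The point is simply that the hypotheses and the recursion of Theorem \ref{thm1} both degenerate, under this choice, to exactly the statement at hand.

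First I would check that $f\equiv 0$ meets the standing hypotheses (A1)--(A4). Indeed (A1) holds since $f(x,x)=0$, (A2) holds since $f(x,y)+f(y,x)=0\leq 0$, (A3) holds since $\lim_{t\downarrow 0}f(tz+(1-t)x,y)=0=f(x,y)$, and (A4) holds because the constant map $y\mapsto 0$ is convex and lower semicontinuous. Moreover $f(x,y)=0\geq 0$ for every $y\in C$ gives $EP(f)=C$, so that $F(S)\cap EP(f)=F(S)$. In particular the nonemptiness hypothesis $F(S)\cap EP(f)\neq\emptyset$ of Theorem \ref{thm1} reduces to $F(S)\neq\emptyset$, which is exactly the assumption of the Corollary, and the limit point $\Pi_{F(S)\cap EP(f)}x$ becomes $\Pi_{F(S)}x$.

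It remains to verify that the iteration of Theorem \ref{thm1} collapses to the one stated here. With $f\equiv 0$ the defining relation for $u_n$ becomes $\frac{1}{r_n}\langle y-u_n,Ju_n-Jz_n\rangle\geq 0$ for all $y\in E$, equivalently $\langle y-u_n,Ju_n-Jz_n\rangle\geq 0$ for all $y\in E$, which is precisely the constraint appearing in this Corollary. Letting $y$ range over all of $E$ forces $Ju_n-Jz_n=0$, and since $E$ is strictly convex the duality map $J$ is injective, whence $u_n=z_n$. The recursions defining $z_n$, $y_n$, $C_n$, $Q_n$ and $x_{n+1}$ are identical in both statements, so specializing Theorem \ref{thm1} to $f\equiv 0$ produces exactly the sequences of the Corollary and yields $x_n\to\Pi_{F(S)\cap EP(f)}x=\Pi_{F(S)}x$.

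I do not expect any genuine obstacle here, as the argument is a routine specialization of the main theorem. The only mild point of care is the introduction of the auxiliary sequence $\{r_n\}$, which is demanded by the hypotheses of Theorem \ref{thm1} but is absent from the Corollary's scheme; one checks that it cancels as soon as $f\equiv 0$, so that no constraint on $\{r_n\}$ survives in the conclusion.
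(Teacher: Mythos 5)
Your proposal is correct and is essentially the paper's own proof: the authors likewise obtain this corollary by setting $f(x,y)=0$ and $r_n=1$ in Theorem \ref{thm1}, and you have merely spelled out the routine verifications (that $f\equiv 0$ satisfies (A1)--(A4), that $EP(f)=C$ so $F(S)\cap EP(f)=F(S)$, and that the $u_n$-relation collapses to the stated one, indeed forcing $u_n=z_n$ by injectivity of $J$). No gaps; your filled-in details are exactly what the paper leaves implicit.
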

\begin{proof}
Letting $f(x,y)=0$ for all $x,y\in E$ and $r_n=1$ for all $n \in
\mathbb{N}$, in Theorem \ref{thm1}, we get the desired result.
\end{proof}
%%%%%%%%%%%%%%%%%%%%%%%%%%%%%%%%%%%%%%%%%%%%%%%%%%%%%%%%%%%%%%
\begin{cor}
 Let $C$ be a nonempty, closed convex subset of uniformly smooth and uniformly convex Banach space $E$ and $f$ be a bifunction from $C\times C$ to $\mathbb{R}$ satisfying $(A1)-(A4)$ and $S$ be a relatively nonexpansive self-mapping of $C$ with $F(S)\cap EP(f)\neq\phi$.
Assume that  $\{r_n\}\subset(0,\infty)$ satisfies
$\liminf_{n\rightarrow\infty}r_n>0$ and  $\{\beta_n\}$ is sequence
in $[0,1]$ such that $\liminf_{n\rightarrow\infty}\beta_n(1-\beta_n)>0$. If $\{x_n\}$ and $\{u_n\}$ be sequences generated by
$x=x_1\in C$ and
\begin{equation*}
\begin{cases}
& z_n=J^{-1}(\beta_nJx_n+(1-\beta_n)JSx_n),\\
&u_n\in E\;\text{such that}\quad f(u_n,y) +\frac{1}{r_n}\langle y-u_n,Ju_n-Jz_n\rangle\geq0,\quad\forall\: y\in E\\
 &C_n=\lbrace v\in C:\phi(v,z_n)\leq\phi(v,x_n)\rbrace,\\
 &Q_n=\lbrace z\in C:\langle x_n-z,Jx_n-Jx\rangle\leq0\rbrace,\\
 &x_{n+1}=\Pi_{C_n\cap Q_n}x,
  \end{cases}
\end{equation*}
for all $n\in\mathbb{N}$. Then, $\{x_n\}$ converges strongly to $\Pi_{F(S)\cap EP(f)}x$, where $\Pi_{F(S)\cap EP(f)}$ is the generalized projection of $E$ onto $F(S)\cap EP(f)$.
\end{cor}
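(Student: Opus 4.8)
The plan is to observe that this corollary corresponds to taking $\alpha_n\equiv 1$ in Theorem \ref{thm1}: with $\alpha_n=1$ one has $y_n=J^{-1}(Jz_n)=z_n$, so the set $C_n=\{v\in C:\phi(v,y_n)\leq\phi(v,x_n)\}$ of Theorem \ref{thm1} collapses to $\{v\in C:\phi(v,z_n)\leq\phi(v,x_n)\}$, which is exactly the $C_n$ here. I would therefore reproduce the argument of Theorem \ref{thm1}, importing verbatim every step that does not use $\alpha_n$ being bounded away from $1$. First, $C_n$ is convex because $\phi(v,z_n)\leq\phi(v,x_n)$ is equivalent to $\|z_n\|^2-\|x_n\|^2-2\langle v,Jz_n-Jx_n\rangle\geq 0$, while $Q_n$ is closed and convex, so $C_n\cap Q_n$ is closed and convex and the generalized projection defining $x_{n+1}$ is legitimate.

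Next I would prove $F(S)\cap EP(f)\subset C_n\cap Q_n$ for all $n$ by induction. For $u\in F(S)\cap EP(f)$, relative nonexpansiveness of $S$ and convexity of $\|\cdot\|^2$ give $\phi(u,z_n)\leq\beta_n\phi(u,x_n)+(1-\beta_n)\phi(u,Sx_n)\leq\phi(u,x_n)$, so $u\in C_n$ at once; because $C_n$ is built directly from $z_n$, the intermediate estimate $\phi(u,y_n)\leq\phi(u,x_n)$ of Theorem \ref{thm1} is not needed. The $Q_n$ part and the induction step are identical, using Lemma \ref{lm.2}. Well-definedness follows, and from $x_n=\Pi_{Q_n}x$, $x_{n+1}\in Q_n$ and Lemma \ref{lm.1} I obtain that $\{\phi(x_n,x)\}$ is nondecreasing and bounded, hence convergent, and that $\phi(x_{n+1},x_n)\to 0$; Lemma \ref{lm.4} then yields $\|x_{n+1}-x_n\|\to 0$. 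Since $C_n$ is built from $z_n$, the inclusion $x_{n+1}\in C_n$ gives $\phi(x_{n+1},z_n)\leq\phi(x_{n+1},x_n)\to 0$, so $\|x_{n+1}-z_n\|\to 0$ and therefore $\|x_n-z_n\|\to 0$ directly. Finally, the estimate of Theorem \ref{thm1} based on Lemma \ref{lm.5} applied in the uniformly convex dual $E^{*}$ transfers and gives $\beta_n(1-\beta_n)g(\|Jx_n-JSx_n\|)\leq\phi(u,x_n)-\phi(u,z_n)$; since $\|x_n-z_n\|\to 0$ forces $\phi(u,x_n)-\phi(u,z_n)\to 0$ and $\liminf_{n}\beta_n(1-\beta_n)>0$, I conclude $g(\|Jx_n-JSx_n\|)\to 0$, hence $\|JSx_n-Jx_n\|\to 0$ and, by uniform continuity of $J^{-1}$, $\|x_n-Sx_n\|\to 0$.

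The one step of Theorem \ref{thm1} that does not survive this specialization, and which I expect to be the main obstacle, is the proof that $\|z_n-u_n\|\to 0$: there it was obtained from $Jy_n=\alpha_n Jz_n+(1-\alpha_n)Ju_n$ by dividing by $1-\alpha_n$, which is impossible once $\alpha_n=1$. To recover it I would use the firmly nonexpansive-type inequality of Lemma \ref{lm.7}(ii). Since $u_n=T_{r_n}z_n$ and $T_{r_n}u=u$ for $u\in EP(f)$, that inequality gives $\langle u_n-u,Ju_n-Jz_n\rangle\leq 0$, whence property (2) of $\phi$ yields $\phi(u_n,z_n)\leq\phi(u,z_n)-\phi(u,u_n)$ for every $u\in F(S)\cap EP(f)$. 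Everything thus reduces to showing $\phi(u_n,z_n)\to 0$, i.e. $\phi(u,z_n)-\phi(u,u_n)\to 0$. This is the delicate point: because $\|x_n-z_n\|\to 0$ already gives $\phi(u,x_n)-\phi(u,z_n)\to 0$, the required limit is equivalent to $\phi(u,x_n)-\phi(u,u_n)\to 0$, and the latter would follow at once from $\|x_n-u_n\|\to 0$ --- but $u_n$ never enters the definition of $C_n$, so (unlike in related hybrid schemes where the resolvent appears in the analogue of $C_n$) this control is not available for free, and $u_n=T_{r_n}z_n$ approaches the limit of $\{z_n\}$ only once that limit is known to lie in $EP(f)$. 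Closing this gap --- extracting $\phi(u_n,z_n)\to 0$ from the firmly nonexpansive inequality and the convergent quantities $\{\phi(u,x_n)\}$, $\{\phi(u,z_n)\}$ alone --- is the crux of the argument.

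Granting $\|z_n-u_n\|\to 0$, the proof finishes exactly as in Theorem \ref{thm1}. Uniform norm-to-norm continuity of $J$ gives $\|Ju_n-Jz_n\|\to 0$, and since $\liminf_n r_n>0$ this forces $\frac{1}{r_n}\|Ju_n-Jz_n\|\to 0$. Passing to a subsequence with $x_{n_k}\rightharpoonup\hat{x}$, from $\|x_n-z_n\|\to 0$ and $\|z_n-u_n\|\to 0$ I get $z_{n_k}\rightharpoonup\hat{x}$ and $u_{n_k}\rightharpoonup\hat{x}$; then $\|x_n-Sx_n\|\to 0$ together with $F(S)=\hat{F}(S)$ gives $\hat{x}\in F(S)$, while letting $k\to\infty$ in $f(u_{n_k},y)+\frac{1}{r_{n_k}}\langle y-u_{n_k},Ju_{n_k}-Jz_{n_k}\rangle\geq 0$ and using (A2), (A4) and then (A3) (via $y_t=ty+(1-t)\hat{x}$) exactly as in Theorem \ref{thm1} gives $\hat{x}\in EP(f)$. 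Finally, writing $\omega=\Pi_{F(S)\cap EP(f)}x$, the inequality $\phi(x_{n+1},x)\leq\phi(\omega,x)$, weak lower semicontinuity of the norm, the defining property of $\Pi_{F(S)\cap EP(f)}$, and the Kadec--Klee property of $E$ identify $\hat{x}=\omega$ and upgrade the convergence to $x_n\to\Pi_{F(S)\cap EP(f)}x$.
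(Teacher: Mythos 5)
You have correctly isolated the one step of Theorem~\ref{thm1} that dies at $\alpha_n\equiv 1$ --- the passage from $\|Jy_n-Jz_n\|\to0$ to $\|Jz_n-Ju_n\|\to0$ by dividing by $1-\alpha_n$ --- and your candid admission that you cannot extract $\phi(u_n,z_n)\to0$ from Lemma~\ref{lm.7}(ii) together with the convergent quantities $\phi(u,x_n)$, $\phi(u,z_n)$ is a genuine gap, not a technical omission. In fact the gap is unfixable, because the corollary as stated is false: with $\alpha_n\equiv1$ the point $u_n=T_{r_n}z_n$ is computed but never fed back into the algorithm ($C_n$ tests only $z_n$ against $x_n$), so nothing forces the iterates toward $EP(f)$. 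Concretely, take $E=\mathbb{R}$ (a Hilbert space, so uniformly convex and uniformly smooth, with $J=I$ and $\phi(x,y)=|x-y|^2$), $C=[0,1]$, $f(x,y)=y-x$ (which satisfies (A1)--(A4) with $EP(f)=\{0\}$), $S=I$ (relatively nonexpansive with $F(S)=\hat{F}(S)=C$), $\beta_n=\tfrac12$, $r_n=1$, and $x=x_1=1$. Then $z_n=x_n$, hence $C_n=C$ and $Q_n=C$ for every $n$, so $x_{n+1}=\Pi_C(1)=1$ for all $n$: the sequence is constantly $1$, whereas $\Pi_{F(S)\cap EP(f)}x=\Pi_{\{0\}}1=0$. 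This example also kills the exact quantity you identified as the crux: here $u_n\equiv0$ while $z_n\equiv1$, so $\phi(u_n,z_n)\equiv1$ does not vanish, even though $\phi(u,x_n)$ and $\phi(u,z_n)$ converge.

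For comparison, the paper ``proves'' this corollary in one line, by setting $\alpha_n=1$ in Theorem~\ref{thm1}. That looks legitimate against the theorem's stated hypothesis $0<a\leq\alpha_n\leq1$, but the proof of Theorem~\ref{thm1} silently invokes $\alpha_n<b<1$ at precisely the step you flagged (the estimate $\|Jz_n-Ju_n\|\leq\frac{1}{1-b}\|Jy_n-Jz_n\|$), an assumption absent from the theorem's statement; the corollary therefore lives exactly in the regime where the theorem was never proved, and where (by the example above, which also satisfies the hypotheses of Theorem~\ref{thm1} with $\alpha_n\equiv1$) it is false. Honest repairs are: either rebuild the feedback by defining $C_n$ from the resolvent output, e.g.\ $C_n=\{v\in C:\phi(v,u_n)\leq\phi(v,x_n)\}$, in which case $x_{n+1}\in C_n$ yields $\|x_n-u_n\|\to0$ and your Lemma~\ref{lm.7}(ii) computation closes the argument via $\phi(u_n,z_n)\leq\phi(u,z_n)-\phi(u,u_n)\leq\phi(u,x_n)-\phi(u,u_n)\to0$ (this is the Takahashi--Zembayashi design); or keep the scheme as written and weaken the conclusion to strong convergence to $\Pi_{F(S)}x$, dropping $EP(f)$ from the claim. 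Your blind attempt, unlike the paper's proof, makes the defect visible.
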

\begin{proof}
Letting $\alpha_n=1$, in Theorem \ref{thm1}, we get the desired result.
\end{proof}
%%%%%%%%%%%%%%%%%%%%%%%%%%%%%%%%%%%%%%%%%%%%%%%%%%%%%%%%%%%%%%%%
 \begin{cor}
 Let $C$ be a nonempty closed convex subset of uniformly smooth and uniformly convex Banach space $E$ and $f$ be a bifunction from $C\times C$ to $\mathbb{R}$ satisfying $(A1)-(A4)$ with $ EP(f)\neq\phi$. Assume that $0<a\leq\alpha_n\leq1$ and $\{r_n\}\subset(0,\infty)$ satisfies
$\liminf_{n\rightarrow\infty}r_n>0$ and  $\{\beta_n\}$ is sequence
in $[0,1]$ such that $\liminf_{n\rightarrow\infty}\beta_n(1-\beta_n)>0$. If $\{x_n\}$ and $\{u_n\}$ be sequences generated by
$x=x_1\in C$ and
\begin{equation*}
\begin{cases}
 &u_n\in E\;\text{such that}\quad f(u_n,y) +\frac{1}{r_n}\langle y-u_n,Ju_n-Jx_n\rangle\geq0,\quad\forall\: y\in E\\
 & y_n=J^{-1}(\alpha_nJx_n+(1-\alpha_n)Ju_n),\\
 &C_n=\lbrace v\in C:\phi(v,y_n)\leq\phi(v,x_n)\rbrace,\\
 &Q_n=\lbrace z\in C:\langle x_n-z,Jx_n-Jx\rangle\leq0\rbrace,\\
 &x_{n+1}=\Pi_{C_n\cap Q_n}x,
  \end{cases}
\end{equation*}
for all $n\in\mathbb{N}$. Then, $\{x_n\}$ converges strongly to $\Pi _{EP(f)}x$, where $\Pi_{ EP(f)}$ is the generalized  projection of $E$ onto $EP(f)$.
\end{cor}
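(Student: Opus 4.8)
The plan is to deduce this corollary from Theorem \ref{thm1} by specializing the relatively nonexpansive mapping to the identity map $S=I$ on $C$. First I would verify that $I$ is indeed a relatively nonexpansive self-mapping of $C$. Its fixed-point set is $F(I)=C$, which is nonempty; the inequality $\phi(u,Ix)=\phi(u,x)\leq\phi(u,x)$ holds trivially for every $u\in F(I)$ and every $x\in C$; and since $C$ is closed and convex it is weakly closed, so every weak limit of a sequence in $C$ again lies in $C$, which gives $\hat{F}(I)=C=F(I)$. Thus all three defining conditions of relative nonexpansiveness are satisfied.

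Next I would observe that, with $S=I$, the auxiliary sequence $\{z_n\}$ of Theorem \ref{thm1} collapses onto $\{x_n\}$: indeed
$$z_n=J^{-1}\bigl(\beta_nJx_n+(1-\beta_n)JIx_n\bigr)=J^{-1}(Jx_n)=x_n$$
for every $n$, independently of the value of $\beta_n$. Substituting $z_n=x_n$ into each line of the scheme of Theorem \ref{thm1} reproduces verbatim the scheme of the present corollary: the equilibrium step becomes $f(u_n,y)+\frac{1}{r_n}\langle y-u_n,Ju_n-Jx_n\rangle\geq0$, the averaging step becomes $y_n=J^{-1}(\alpha_nJx_n+(1-\alpha_n)Ju_n)$, and the sets $C_n,\,Q_n$ together with the projection $x_{n+1}=\Pi_{C_n\cap Q_n}x$ are left unchanged.

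Finally, since $EP(f)\subset C=F(I)$ we have $F(I)\cap EP(f)=EP(f)$, which is nonempty by hypothesis, so the standing assumption $F(S)\cap EP(f)\neq\emptyset$ of Theorem \ref{thm1} holds. All the remaining hypotheses—namely $E$ uniformly smooth and uniformly convex, $f$ satisfying $(A1)$–$(A4)$, $0<a\leq\alpha_n\leq1$, $\liminf_{n\to\infty}r_n>0$, and $\liminf_{n\to\infty}\beta_n(1-\beta_n)>0$—are assumed in the corollary and coincide exactly with those required by the theorem. Invoking Theorem \ref{thm1} therefore gives that $\{x_n\}$ converges strongly to $\Pi_{F(I)\cap EP(f)}x=\Pi_{EP(f)}x$, as claimed.

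The single point that is not purely mechanical is the identity $\hat{F}(I)=F(I)$; this is where the closed and convex (hence weakly closed) structure of $C$ is genuinely used, and I expect it to be the only step requiring a word of justification, since everything else is a direct substitution into an already established theorem.
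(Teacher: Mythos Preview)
Your proposal is correct and follows exactly the paper's approach: the paper proves this corollary in one line by letting $S=I$ in Theorem~\ref{thm1}. Your additional verification that $I$ is relatively nonexpansive, that $z_n=x_n$, and that $F(I)\cap EP(f)=EP(f)$ simply makes explicit the routine checks the paper leaves to the reader.
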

\begin{proof}
Letting $S=I$ in Theorem \ref{thm1}, we get the desired result.
\end{proof}
%%%%%%%%%%%%%%%%%%%%%%%%%%%%%%%%%%%%%%%%%%%%%%%%%%%%%%%%%%%%%%%%

%%%%%%%%%%%%%%%%%%%%%%%%%%%%%%%%%%%%%%%%%%%%%%%%%%%%%%%%
%\begin{rk}
%As previously mentioned, the class of generalized hybrid mappings
%includes the classes of nonexpansive, nonspreading and hybrid
%mappings in a Hilbert space. Hence the Theorem \ref{thm1} and all of
%the its corollaries hold for these mappings.
%\end{rk}
%%%%%%%%%%%%%%%%%%%%%%%%%%%%%%%%%%%%%%%%%%%%%%%%%%%%%%%%
{\footnotesize}

\end{document}